\def\le{\leqslant}
\def\leq{\leqslant}
\def\ge{\geqslant}
\def\geq{\geqslant}
\def\phi{\varphi}
\def\cal{\mathcal}
\def\kappa{\varkappa}
\newtheorem{theorem}{\bf \indent Theorem}[section]
\newtheorem{lemma}{\bf \indent Lemma}[section]
\newtheorem{corollary}{\bf \indent Corollary}[section]
\theoremstyle{remark}
\newtheorem{definition}{\bf \indent Definition}[section]
\newtheorem{example}{\bf \indent Example}[section]
\numberwithin{equation}{section}
\begin{document}

{\Large \bf
 CRITERIA FOR ANALYTICITY  OF\\
 
 MULTIDIMENSIONALLY\\
 
SUBORDINATE SEMIGROUPS }

\vskip7mm

{\bf A. R. Mirotin}
\vskip-2pt
{\small
 amirotin@yandex.ru}

\

\begin{flushright}
\textit{Dedicated to the memory of Yu. I. Lyubich }
\end{flushright}

\vskip10mm


Key words and phrases: strongly continuous semigroup, holomorphic semigroup, subordination, Bochner-Phillips functional calculus,  Bernstein function.

\vskip10mm

{\noindent\it
Let $\psi$ be a Bernstein function in one variable. A.~Carasso and T.~Kato obtained
necessary and sufficient conditions for $\psi$ to have a property
that $\psi(A)$ generates a quasibounded holomorphic semigroup for
every generator $A$ of a bounded $C_0$-semigroup in a Banach
space and deduced  necessary conditions as well. We generalize their results to the multidimensional case and also give  sufficient conditions for the property mentioned above.
}

{\centering
\section{Introduction}
}

The celebrated theorem of Yosida \cite{Yos60} states
that a fractional power $-(-A)^\alpha, 0<\alpha<1$ of a generator $A$ of a bounded $C_0$-semigroup on a
Banach space $X$ is a genetrator of a   holomorphic semigroup  on $X$. 
A. Carasso and T. Kato \cite[ Theorem 4]{CK} obtained a description of such  (non-negative) Bernstein functions $\psi$ in one variable
 that for every generator $A$ of a bounded $C_0$-semigroup on a
Banach space $X$ the operator $-\psi(-A)$ is a generator of a quasibounded holomorphic semigroup on $X$.  The alternative criteria   and sufficient conditions were obtained in \cite{MirSF}.
Another sufficient conditions were obtained  in  \cite{Fuj}.

 The present
paper is devoted to some generalizations and analogs of A.~ Carasso and T.~ Kato's results 
 in terms of multidimensional Bochner-Phillips calculus. \footnote{The multidimensional Bochner-Phillips calculus was developed in \cite{Mir97},  \cite{Mir98}, \cite{a&A}, \cite{MirSMZ}, \cite{IZV2015}, \cite{OaM}, \cite{OaM2}.}  These results provide a new method of constructing a large class of holomorphic 
semigroups.

\vskip7pt


{\centering
\section{Preliminaries}
}

We shall denote o by ${\cal
M}(\mathbb{R}_+^n)$ (${\mathcal M}^b(\mathbb{R}_+^n)$,  ${\mathcal M}^1(\mathbb{R}_+^n)$) the convolution algebra of all  positive
(respectively bounded positive, probability) measures on $\mathbb{R}_+^n$. Throughout, $X$ stands
for a complex Banach space,  $B(X)$ denotes the algebra of linear bounded operators on $X$, and  $I$, the identity operator on $X$. 

\begin{definition} 
 A family of operators $T(t))_{t\in \mathbb{R}_+}\subset B(X)$
 is called a  $C_0$-semigroup if

(i) $T(0) = I$ and $T(t_1 + t_2) =T(t_1 )T( t_2)$ for all $t_1, t_2 \in   \mathbb{R}_+$.

(ii) The map $t\mapsto T(t)x$ is continuous on  $ \mathbb{R}_+\forall x\in X$.

If, in addition,

(iii) 	$T(t)$	 is bounded on  $ \mathbb{R}_+$,

we call   $T(t))_{t\in \mathbb{R}_+}$ a bounded $C_0$- semigroup.
\end{definition}

In the following we denote by  $C_0(X)$ the set of bounded  strongly continuous semigroups in a complex Banach space $X$. Further,  $T_{1},\dots , T_{n}$ will denote pairwise commuting semigroups from $C_0(X)$
  with generators $A_1, \dots ,A_n$ respectively satisfying the condition $\|T_{j}(t)\|_{B(X)}\leq
M_j\quad(t\in   \Bbb{R}_+), M_j={\rm const}$.  We put also $M:=\prod_{j=1}^nM_j$.  We denote the domain of $A_j$  by $D(A_j)$ and set $A = (A_1, \dots ,A_n)$. 
By the commutation of operators $A_1, \dots, A_n$
we mean the commutation of the corresponding semigroups. By ${\rm Gen}(X)$ we denote the set of all
generators of uniformly bounded $C_0$-semigroups on $X$ and by ${\rm Gen}(X)^n$, the set of all $n$-tuples  $(A_1, \dots ,A_n)$ where $A_j\in {\rm Gen}(X)$ commute. 
An $B(X)$-valued function $T(u)=T_A(u) := T_{1}(u_1) \dots T_{n}(u_n)$   $(u\in
\Bbb{R}_+^n)$ is a bounded $n$-parameter $C_0$ semigroup with $\|T(u)\|_{B(X)}\le M$  ($u\in \Bbb{R}_+^n$).
The linear manifold $D(A):=\cap_{j=1}^nD(A_j)$ is dense in $X$ \cite[ Sec. 10.10]{HiF}.

We denote by  $\mathcal{L}\nu$  the $n$-dimensional Laplace transform
\begin{equation}\label{laplace1}
\mathcal{L}\nu(z)=\int\limits_{\mathbb{R}_+^n} e^{-z\cdot u}d\nu(u) 
\end{equation}
of a measure $\nu\in {\mathcal M}^b(\mathbb{R}_+^n)$. In this case we put also for   bounded  $n$-parameter $C_0$-semigroup $T=T_A$ and a function $g(s)=\mathcal{L}\nu(-s)$
\begin{equation}\label{laplace1}
g(A):=\langle \nu,T_A\rangle:=\int\limits_{\mathbb{R}_+^n} T_A(u)d\nu(u) 
\end{equation}
(the  Bochner  integral).
Then  $\|g(A)\|_{B(X)}\le M\|\nu\|_{ {\mathcal M}^b}$ if  $\|T_A(u)\|\le M$.

\begin{definition}
 \cite{Boch} We say that a  function $\psi\in C^\infty((-\infty;0)^n)$  {\it is a nonpositive Bernstein function of $n$ variables} 
and write  $\psi\in {\cal T}_n$  if it is nonpositive and all of its first partial derivatives are absolutely
monotone (a function in $C^\infty((-\infty;0)^n)$ is said to be absolutely monotone if it is nonnegative
together with its partial derivatives of all orders).
\end{definition}

Obviously,  $\psi\in{\cal T}_n$ if and only if $-\psi(-s)$ is a nonnegative Bernstein function of $n$ variables on $(0,\infty)^n$, and ${\cal T}_n$  is a cone under the pointwise addition of functions and multiplication by scalars. Moreover ${\cal T}_n$  is an operada  \cite[p. 321]{a&A}.
As is known \cite{Boch} (see also \cite{Mir99}, \cite{mz}), each function  $\psi\in {\cal T}_n$  admits an integral representation of the form (here
 the dot  denotes inner product in  $\mathbb{R}^n$ and the expression $s\to
-0$ means that $s_1\to -0, \ldots, s_n\to -0$)
\begin{equation}\label{psi}
\psi(s)=c_0+c_1\cdot s+\int\limits_{\Bbb{R}_+^n\setminus \{0\}}
(e^{s\cdot v}-1)d\mu (v)\quad   (s\in(-\infty;0)^n),  
\end{equation}
where $c_0=\psi(-0):=\lim\limits_{s\to -0}\psi(s)$, $c_1=(c_1^j)_{j=1}^n\in \Bbb{R}_+^n$, $c_1^j=\lim\limits_{s_j\to -\infty}\psi(s)/s_j$, and $\mu$ is a positive measure on
$\Bbb{R}_+^n\setminus \{0\}$;  $\mu$
are determined by $\psi$.

A lot of  examples of (positive) Bernstein function of one variable one can found in \cite{SSV}
(see also \cite{MirSF}, \cite{mz}). Now we shall  describe a family of functions from  ${\cal T}_2$
(for the multidimensional generalization see \cite[Lemma 1]{OaM2}).

\begin{example}\cite[Theorem 1.11]{a&A}
Let a function $\psi_1\in {\mathcal T}_1$ have the
integral representation
$$
\psi_1(s)=c_0+\int\limits_0^\infty (e^{sv}-1)d\mu_1(v),
$$
where $\omega:=\psi_1'(0-)\ne\infty$. Then the function
$$
\psi(s_1,s_2):=\frac{\psi_1(s_1)-\psi_1(s_2)}{s_1-s_2}-\omega
$$
  (under the value of the function $\psi$ for $s_1=s_2$ we, as
usually  understand its limit at $s_2\to s_1$) belongs to ${\mathcal T}_2$ and has the integral representation
$$
\psi(s_1,s_2)=\int\limits_{\mathbb{R}_+^2\setminus \{0\}}
(e^{s_1u_1+s_2u_2}-1) d\mu(u_1,u_2),
$$
\noindent where $d\mu(u_1,u_2)$ is the image of the measure $d\mu_1(v)dw$ under the map
$$
u_1=\frac{v+w}{2},\quad u_2=\frac{v-w}{2}
$$
(in particular if $d\mu_1(v)=p(v)dv$ then $d\mu(u_1,u_2)=p(u_1+u_2)du_1du_2$).
\end{example}

Each function $\psi\in{\mathcal T}_n$ according to the formula
$$
\psi(z)=c_0+c_1\cdot z+\int\limits_{\mathbb{R}_+^n\setminus \{0\}}
(e^{z\cdot u}-1)d\mu (u) 
$$
extends uniquely to a function that is holomorphic in the domain
$$\{{\rm Re}z<0\}:=\{{\rm Re}z_j<0,j=1,
\ldots,n\}\subset\mathbb{C}^n
$$
  and continuous on its closure \cite{a&A}.

\begin{definition}
   \cite{Mir99} The value of a function  $\psi\in {\cal T}_n$ of the form \eqref{psi} at $A=(A_1,\ldots ,A_n)$ applied
to $x\in D(A)$ is defined by
\begin{equation*}
\psi(A)x=c_0x+c_1\cdot Ax+\int\limits_{\Bbb{R}_+^n\setminus \{0\}}
(T_A(u)-I)xd\mu(u),     
\end{equation*}
 where the integral is taken in the sense of Bochner,  and $c_1\cdot Ax:=\sum_{j=1}^nc_1^jA_jx$.
\end{definition}

Given $\psi\in{\cal T}_n$ and $t\geq 0$, the function $g_t(z):=e^{t\psi(z)}$ is absolutely monotone on $(-\infty;0)^n$. It
is also obvious that $g_t(z)\leq 1$   on $(-\infty;0)^n$. By virtue of the multidimensional version of the Bernstein-Widder
Theorem (see, e.g.,  \cite{Boch}, \cite{BCR}), there exists a unique sub-probability measure $\nu_t$ on $\Bbb{R}_+^n$, such that, for $z\in (-\infty;0)^n$, we have
\begin{equation}\label{laplace}
g_t(z)=\int\limits_{\Bbb{R}_+^n} e^{z\cdot u}d\nu_t(u) =\mathcal{L}(\nu_t)(-z). 
\end{equation}
If $\psi(0)=0$, the family $(\nu_t)_{t\ge 0}$, where $\nu_0:=\delta_0$  (the Dirac measure) is convolution semigroup of probability measures. 

The following theorem describes the family of measures in \eqref{laplace}.

Recall that a net  $(\mu_n)\subset {\mathcal M}^b(\mathbb{R}_+^n)$ converges weakly to  $\mu\in {\mathcal M}^b(\mathbb{R}_+^n)$ (we write $\mu=w\lim_n\mu_n$)
if for each $f\in C^b(\mathbb{R}_+^n)$ 
$$
\lim_n\int_{\mathbb{R}_+^n}fd\mu_n=\int_{\mathbb{R}_+^n}fd\mu.
$$

\begin{theorem}\label{weakcont}
For  the family $(\nu_t)_{t\ge 0}\subset  \mathcal{ M}^1(\mathbb{R}_+^n)$ the following statements are equivalent:

(i)  the mapping
$t\mapsto \nu_t$, $\mathbb{R}_+\to \mathcal{ M}^b(\mathbb{R}_+^n)$ is weakly  continuous;

(ii) $\mathcal{L}\nu_t(-z)=e^{t\psi(z)}$  for some function  $\psi\in \mathcal{T}_n$, $\psi(0)=0$ ($t\in \mathbb{R}_+$, ${\rm Re}z\le 0$).
\end{theorem}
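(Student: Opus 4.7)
My plan is to prove both directions via the multidimensional continuity theorem for Laplace transforms of bounded positive measures on $\mathbb{R}_+^n$: if such a net has Laplace transforms converging pointwise on $\mathbb{R}_+^n$ to a limit continuous at the origin, then the measures converge weakly (this follows from the multidimensional Bernstein--Widder theorem already cited in the paper, or from Stone--Weierstrass applied to the algebra generated by the exponentials). With this tool, $(ii)\Rightarrow(i)$ is immediate: as $t\to t_0$ we have $\mathcal{L}\nu_t(-z)=e^{t\psi(z)}\to e^{t_0\psi(z)}=\mathcal{L}\nu_{t_0}(-z)$ pointwise on $\{\mathrm{Re}\,z\le 0\}$, and the limit equals $1$ at $z=0$ because $\psi(0)=0$, so $\nu_t\to\nu_{t_0}$ weakly.

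For $(i)\Rightarrow(ii)$ I would use that, per the paragraph preceding the theorem, $(\nu_t)_{t\ge 0}$ is tacitly a convolution semigroup of probability measures. For each fixed $z$ with $\mathrm{Re}\,z\le 0$ set $f(t):=\mathcal{L}\nu_t(-z)$. Then $f(t+s)=f(t)f(s)$, $f(0)=1$, $|f(t)|\le 1$, and weak continuity of $t\mapsto\nu_t$ tested against the bounded continuous function $u\mapsto e^{z\cdot u}$ gives continuity of $f$. A standard Cauchy-functional-equation argument then yields $f(t)=e^{t\psi(z)}$ for some $\psi(z)$ with $\mathrm{Re}\,\psi(z)\le 0$, and $\psi(0)=0$ follows from $\mathcal{L}\nu_t(0)=1$.

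To verify $\psi\in\mathcal{T}_n$, observe that on $s\in(-\infty,0)^n$ the function $e^{t\psi(s)}=\mathcal{L}\nu_t(-s)$ is absolutely monotone in $s$, since its $\alpha$-th partial derivative equals $\int u^\alpha e^{s\cdot u}\,d\nu_t(u)\ge 0$; in particular $\mathcal{L}\nu_t(-s)>0$ there, so $\psi=t^{-1}\log\mathcal{L}\nu_t(-s)$ is $C^\infty$ on $(-\infty,0)^n$. From the expansion $\mathcal{L}\nu_t(-s)=1+t\psi(s)+O(t^2)$ as $t\to 0^+$, differentiating and dividing by $t$ yield $\partial^\alpha\psi(s)=\lim_{t\to 0^+}t^{-1}\partial^\alpha\mathcal{L}\nu_t(-s)\ge 0$ for every multi-index $\alpha$ with $|\alpha|\ge 1$. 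Combined with $\psi\le 0$ on $(-\infty,0)^n$ (since $\mathcal{L}\nu_1(-s)\in[0,1]$ there), this places $\psi$ in $\mathcal{T}_n$.

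The main obstacle I expect is justifying the term-by-term limit for $\partial^\alpha\psi$: one must control the $O(t^2)$ remainder in every partial derivative uniformly on compacts of $(-\infty,0)^n$. This can be done either by bounding higher Taylor coefficients of $e^{t\psi}$ in terms of products of $\psi$ and its derivatives on compacts, or, more robustly, by establishing the absolute monotonicity of $\partial_{s_j}\psi$ via $\partial_{s_j}\psi(s)=\partial_t\partial_{s_j}\log\mathcal{L}\nu_t(-s)|_{t=0}$ and then invoking the multidimensional Bernstein--Widder theorem to recover the integral representation of $\psi$.
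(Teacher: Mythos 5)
Your proof is essentially correct, and both directions share their overall skeleton with the paper's proof, but the key step of $(i)\Rightarrow(ii)$ is handled by a genuinely different argument. For $(ii)\Rightarrow(i)$ you do exactly what the paper does (continuity theorem for the Laplace transform); your extra remark that the limit is continuous at $0$ is a useful precision, since weak convergence against all of $C^b(\mathbb{R}_+^n)$ requires tightness. For $(i)\Rightarrow(ii)$, both you and the paper first use the (implicit) convolution semigroup property plus weak continuity to get $\mathcal{L}\nu_t(-s)=(\mathcal{L}\nu_1(-s))^t=e^{t\psi(s)}$ — the paper via $g_{at}=(g_t)^a$ for rational then real $a$, you via the multiplicative Cauchy functional equation, which is the same computation (for complex $z$ you should either note that $f$ cannot vanish, since $f(t_0)=f(t_0/m)^m=0$ would force zeros accumulating at $0$, or simply work on the real cone $(-\infty,0)^n$ and extend by analyticity afterwards). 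The divergence is in showing $\psi\in\mathcal{T}_n$: the paper observes that $\mathcal{L}\nu_1$ is completely monotone and infinitely divisible and then cites an external Lévy--Khintchine-type theorem (\cite{umzh}), whereas you verify the definition of $\mathcal{T}_n$ directly, obtaining $\partial^\alpha\psi(s)=\lim_{t\to 0+}t^{-1}\partial^\alpha_s\mathcal{L}\nu_t(-s)\ge 0$ for $|\alpha|\ge 1$ together with $\psi\le 0$. This is more self-contained and buys independence from the cited result, at the cost of the remainder estimate you flag; that estimate does go through, e.g. $\bigl|\partial^\alpha(\psi^k)\bigr|\le k^{|\alpha|}C^k$ on a compact set with $C$ bounding $|\partial^\beta\psi|$ for $\beta\le\alpha$, so $\sum_{k\ge2}t^k\partial^\alpha(\psi^k)/k!=O(t^2)$ locally uniformly. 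The trade-off is that the paper's route also delivers the integral representation \eqref{psi} for $\psi$ at the same time, while yours only establishes membership in $\mathcal{T}_n$ as defined, and then recovers the representation from the general Bochner theorem quoted earlier in the paper.
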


\begin{proof}
(i)$\Rightarrow$ (ii). Let  the mapping
$t\mapsto \nu_t$ be weakly continuous.
Let $g_t(s):=(\mathcal{L}\nu_t)(-s)$. Then for any $a>0$
the equality $g_{at}(s)=(g_t(s))^a$ is true. In fact, for every
$m\in\mathbb{N}$
$$
g_{mt}(s)=\int\limits_{\mathbb{R}^n_+} e^{s\cdot u}d\nu_{mt}(u)=
\int\limits_{\mathbb{R}^n_+} e^{s\cdot u}d\nu_t^{\ast m}(u)=(g_t(s))^m,
$$
\noindent
where $\nu_t^{\ast m}$ denotes the $m$th convolutional degree.
 Replacing $t$ in the last equality with $(k/m)t$, we get,
that for any positive rational $r=k/m, k,m\in \mathbb{N}$, we have $g_{rt}(s)=(g_t(s))^r$. If now the sequence
of positive rational numbers $r_k$ converges to $a$, then due to
of our continuity condition $g_{at}(s)=\lim_{k\to\infty}
g_{r_kt}(s)=(g_t(s))^a$ (see, for example, \cite[Chapter IX, \S 5, Proposition 14]{Burb}). In particular, $g_t(s)=(g_1(s))^t$, and therefore
the function $(1/t){\rm log}g_t(s)={\rm log}g_1(s)$ does not depend on $t$.
Further, a completely monotonic function $g_1(-r)= ({\mathcal L}\nu_1)(r)$, as
was shown above, satisfies the equality
$g_1(-r)^{1/m}=g_{1/m}(-r), m\in \mathbf{N}, r\in (0;\infty)^n$, and
therefore it is infinitely divisible. Consequently (see, e.~g., \cite{umzh})
 $g_1(s)=e^{\psi(s)}$, where $\psi\in \mathcal{ T}_n$, $\psi(0)=0$. 
 Thus,  $g_t(s)=e^{t\psi(s)}$.

(ii)$\Rightarrow$ (i). If  $\mathcal{L}\nu_t(-z)=e^{t\psi(z)}$, and $t_0\ge 0$, then   $\lim_{t\to  t_0}\mathcal{L}\nu_t(-z)=\mathcal{L}\nu_{t_0}(-z)$ for all $z$ with ${\rm Re}z\le 0$.
Therefore by the continuity theorem for the Laplace transform we have  $w\lim_{t\to t_0}\nu_t=\nu_{t_0}$. This completes the proof.
\end{proof}

\begin{definition} In the notation introduced above, we set
\begin{equation}\label{gA}
g_t(A)x:=\langle \nu_t,T_A x \rangle=\int\limits_{\Bbb{R}_+^n} T_A(u)xd\nu_t(u)\ (x\in X). 
\end{equation}
\end{definition}

Obviously, $\|g_t(A)\|_{B(X)}\leq M$.  The map $g(A):t\mapsto g_t(A)$ is a $C_0$-semigroup. In the one-dimensional case, it is called the
\textit{semigroup subordinate to} $T_A$ (in a sense of Bochner). In the general case, we call this semigroup \textit{multidimansionally subordinate to} $T_A$.

In \cite{Mir09} it was proved that the closure of the operator $\psi(A)$ exists and is the generator of the $C_0$-semigroup
$g(A)$ (cf. \cite{Mir99}.) Since it is no easy to find this statement in the literature, we sketch its proof
below.

\begin{theorem} \cite{Mir09} The closure of the operator  $\psi(A)$ exists and is the generator of the semigroup $g(A)$ of class $C_0$ defined by formula \eqref{gA}.
\end{theorem}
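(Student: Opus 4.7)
My plan is to first verify that $g(A) := (g_t(A))_{t\geq 0}$ is indeed a $C_0$-semigroup, then identify $\psi(A)|_{D(A)}$ with the restriction of its generator $B$, and finally prove that $D(A)$ is a core for $B$.

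\textbf{Step 1 (semigroup structure).} The semigroup law $g_{t+s}(A)=g_t(A)g_s(A)$ comes from the convolution identity $\nu_{t+s}=\nu_t\ast\nu_s$, which in turn follows from $e^{(t+s)\psi(z)}=e^{t\psi(z)}e^{s\psi(z)}$ together with injectivity of the Laplace transform; combined with $T_A(u+v)=T_A(u)T_A(v)$ and Fubini's theorem this gives the composition of the two Bochner integrals. Strong continuity at $0$ reduces to the weak convergence $\nu_t\to\delta_0$ as $t\to 0^+$: after factoring out the scalar $e^{tc_0}$, Theorem~\ref{weakcont} applied to the normalized probability measures yields this weak convergence, and then the strong continuity of $u\mapsto T_A(u)x$ together with the uniform bound $\|T_A(u)\|\leq M$ gives $g_t(A)x\to x$ by dominated convergence.

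\textbf{Step 2 (generator coincides with $\psi(A)$ on $D(A)$).} For $x\in D(A)$ I would split
$$\frac{g_t(A)x-x}{t}=\frac{e^{tc_0}-1}{t}\,x+\frac{1}{t}\int_{\mathbb{R}_+^n}\bigl(T_A(u)x-x\bigr)\,d\nu_t(u).$$
The first summand converges to $c_0 x$. For the second, the infinitesimal version of the Lévy-Khinchin representation says that for smooth $f$ with $f(0)=0$,
$$\frac{1}{t}\int f(u)\,d\nu_t(u)\longrightarrow c_1\cdot\nabla f(0)+\int_{\mathbb{R}_+^n\setminus\{0\}}f(v)\,d\mu(v).$$
To lift this to the vector-valued integrand $T_A(u)x-x$, one controls the integrand near $0$ via $\|T_A(u)x-x\|\leq M\sum_j u_j\|A_j x\|$ (valid on $D(A)$ by differentiating the one-parameter pieces of $T_A$), and far from $0$ via $\|T_A(u)x-x\|\leq (M+1)\|x\|$. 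Splitting at a small ball of radius $\varepsilon$, matching the Taylor term $u\cdot Ax$ against the drift moment of $\nu_t$, and letting $\varepsilon\to 0$ produces in the limit exactly $c_0 x + c_1\cdot Ax+\int_{\mathbb{R}_+^n\setminus\{0\}}(T_A(u)x-x)\,d\mu(u)=\psi(A)x$, so $Bx=\psi(A)x$ on $D(A)$.

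\textbf{Step 3 (core property).} Since the generators $A_j$ commute with every $T_A(u)$, pulling the closed operator $A_j$ through the Bochner integral defining $g_t(A)$ yields $A_j g_t(A)x = g_t(A) A_j x$ for $x\in D(A_j)$; hence $D(A)$ is $g(A)$-invariant. As $D(A)$ is dense in $X$ by \cite[Sec.~10.10]{HiF}, the standard core criterion (a dense, semigroup-invariant subspace contained in the domain of the generator is a core) yields $\overline{\psi(A)}=B$. The main obstacle is Step~2: one must quantify precisely how $t^{-1}\nu_t$ recovers both the drift $c_1$ and the Lévy measure $\mu$ in the limit, while handling the possibly infinite total mass of $\mu$ near the origin by exploiting the $D(A)$-regularity of $x$ to make the integrand $O(|u|)$ there.
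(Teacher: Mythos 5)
Your argument is correct, and its decisive step coincides with the paper's: the paper's proof consists of (i) citing \cite{a&A} for the facts that $g(A)$ is a $C_0$-semigroup whose generator $G$ extends $\psi(A)|_{D(A)}$, and (ii) observing that $g_t(A)$ leaves each $D(A_j)$, hence $D(A)$, invariant, so that the dense invariant subspace $D(A)\subset D(G)$ is a core for $G$ and therefore $\overline{\psi(A)}=G$. Your Step~3 is exactly (ii). The difference is that you attempt to prove from scratch what the paper outsources to \cite{a&A}: in Step~1 you derive the semigroup law from $\nu_{t+s}=\nu_t\ast\nu_s$ and strong continuity from the weak continuity of $t\mapsto\nu_t$ (Theorem~\ref{weakcont}), and in Step~2 you identify the generator on $D(A)$ via the infinitesimal form of the L\'evy--Khinchin representation, $\frac{1}{t}\int f\,d\nu_t\to c_1\cdot\nabla f(0)+\int f\,d\mu$, lifted to the vector-valued integrand $T_A(u)x-x$ by the estimates $\|T_A(u)x-x\|\le M\sum_j u_j\|A_jx\|$ near the origin and $O(\|x\|)$ away from it. This buys self-containedness, at the price that Step~2 — which you yourself flag as the main obstacle — remains a plan rather than a finished argument: the scalar limit statement itself needs proof (vague convergence of $t^{-1}\nu_t$ to $\mu$ off the origin plus control of the truncated first moments), and the passage to the Bochner integral requires a uniform-in-$t$ tail estimate to justify the $\varepsilon\to 0$ interchange. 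Since the paper's own proof is explicitly a sketch that delegates precisely this part to \cite{a&A}, your proposal is at least as complete as the original on the step both of you actually carry out, and more ambitious on the rest.
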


\begin{proof} It was proved in \cite{a&A} that the operator $\psi(A)$ is closable, and
its extension is the generator $G$  of the $C_0$-semigroup
$g(A)$ . Therefore, the operator $G$ is an extension of the closure  $\overline{\psi(A)}$ of $\psi(A)$. We show that an equality holds here. Since the operators $g_t(A)$ commute with  $T_j(s)$ for all $j$, it is easy to check that $g_t(A):D(A_j)\to D(A_j)$ ) for all $j$, and therefore $g_t(A):D(A)\to D(A)$.
It follows that $D(A)$ is an operator core for  the generator $G$
(see, e.~g.,  \cite[Corollary 3.1.7]{BrRob}). On the other hand, $D(A)$ is an operator core  for the operator $\psi(A)$, and the restriction of the operators $G$ to $D(A)$
coincide with  $\psi(A)$. Therefore $\overline{\psi(A)}=G$, which completes the proof.
\end{proof}

The previous result suggests the following final version of the definition of the operator $\psi(A)$.

\begin{definition}
\cite{Mir99} By the value of a function $\psi\in {\cal T}_n$ at an $n$-tuple $A = (A_1,\dots ,A_n)$ of
commuting operators in ${\rm Gen}(X)$ we understand the generator of the semigroup $g(A)$, i.e.,  the closure of the operator defined in the Definition \ref{psi}. This value is
denoted by $\psi(A)$.
\end{definition}

The functional calculus thus arising is called multidimensional Bochner-Phillips
calculus, or $\mathcal{T}_n$-calculus.

Without loss of generality we shall assume that
$c_0=0$ in \eqref{psi}. We assume also that $c_1=0$.The corresponding subclass of  ${\cal T}_n$ will be denoted
by ${\cal T}^{0}_n$. 

The notation and constraints introduced above are used in what follows without additional
explanations.

\begin{definition} Let
\begin{equation*}
\Sigma(\theta):=\{z\in \mathbb{ C}:\mathrm{Re}(z)>0, |\arg(z)|< \theta<\pi/2\}.
\end{equation*}

 A family of operators $(V(z))_{z\in \Sigma(\theta)\cup\{0\}}\subset B(X)$
 is called a  holomorphic
(analytic) semigroup (of angle $\theta\in  (0, \pi/2]$) if

(i) $V(0) = I$ and $V(z_1 + z_2) = V(z_1)V(z_2)$ for all $z_1, z_2 \in  \Sigma(\theta)$.

(ii) The map $z\mapsto V(z)$ is analytic in $\Sigma(\theta)$.

(iii)
$\underset{z\in \Sigma(\theta')\ni z\to 0}{\lim}V(z)x=x$
  $\forall x\in X$ and $0<\theta'<\theta$.

If, in addition,

(iv) 	$V(z)$	 is bounded in $\Sigma(\theta')$ for every $0<\theta'<\theta$,

we call   $(V(z))_{z\in \Sigma(\theta)\cup\{0\}}$ a bounded holomorphic semigroup.
\end{definition}

A result by Yosida \cite{Yos59} asserts that if the bounded
$C_0$-semigroup $U$ with generator $G$ on  $X$ satisfies
\begin{equation*}
U(t)X\subset D(G), t>0,\quad {\rm and} \quad \lim\sup\limits_{t\to 0+} (t\|GU(t)\|)<
\infty, \eqno(Y)
\end{equation*}
then for any $\beta>0$, $e^{-\beta t}U(t)$ can be extended to a bounded  holomorphic
semigroup  on $X$. In this case, the semigroup $U$ will be called {\it a quasibounded holomorphic semigroup}. 

We shall denote by  ${\cal T}_n^Y$   the set of all $\psi\in{\cal T}_n^0$  such that
$\psi(G)$ generates  a bounded  $C_0$-semigroup   with property (Y) for every generator
$G$ of     a bounded  $C_0$-semigroup in a Banach space. 

{\centering
\section{Criteria for analyticity }
}

In the sequel, for any measure $\mu\in {\mathcal M}^b(\mathbb{R}_+^n)$  we put
\begin{equation*}
(Z_\mu f)(x):=(\mu\ast f)(x)= \int_{\mathbb{R}_+^n}f(x-u)d\mu(u) ,\quad f\in L^1(\mathbb{R}^n_+).
\end{equation*}
Note, that the semigroup $Z_{\nu_t}$ is of the form $\langle \nu_t,T\rangle$ where $T(u)f(x)=f(x-u)$ ($u\in \mathbb{R}^n_+$) is the left translation  $n$-parameter semigroup in  $L^1(\mathbb{R}^n_+)$.

For the convolution semigroup $(\nu_t)_{t\ge 0}\subset {\mathcal M}^1(\mathbb{R}_+^n)$ we define the weak derivative as 
$$
w\nu'_t:=w\lim_{\Delta t\to 0}\frac{1}{\Delta t}(\nu_{t+\Delta t}-\nu_t).
$$

\begin{theorem}\label{criteria} (cf. \cite{CK}). Let  $\psi\in  {\cal T}_n^0$. 
The following conditions are equivalent:

(a) for all $T_j\in C_0(X)$ ($j=1,\dots,n$) the semigroup $g_t(A)=\langle \nu_t,T\rangle$ satisfies the Yosida condition (Y) for  every  Banach space $X$ (i.e. $\psi(A)$ is a generator of   a quasibounded holomorphic semigroup $g(A)$ in $X$);

(b)  the semigroup $Z_{\nu_t}$ satisfies the Yosida condition (Y) in the  space $L^1(\mathbb{R}^n_+)$;

(c) the map $t\mapsto \nu_t$, $\mathbb{R}_+\to {\mathcal M}^b(\mathbb{R}_+^n)$ is continuously differentiable for $t>0$, with $\|\nu_t'\|_{{\mathcal M}^b}=O(t^{-1})$ as $t\to 0+$.

(d) the map $t\mapsto \nu_t$, $\mathbb{R}_+\to {\mathcal M}^b(\mathbb{R}_+^n)$ is weakly differentiable for $t>0$, with $\|w\nu'_t\|_{{\mathcal M}^b}=O(t^{-1})$ as $t\to 0+$.

\end{theorem}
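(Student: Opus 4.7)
The plan is to establish the cycle $(a) \Rightarrow (b) \Rightarrow (c) \Rightarrow (d) \Rightarrow (a)$, mirroring the Carasso--Kato argument for $n=1$ and inserting (d) at the end of the chain.

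For $(a) \Rightarrow (b)$ I specialize the hypothesis to $X = L^1(\mathbb{R}_+^n)$ with the pairwise commuting left-translation semigroups $T_j$; these are contraction $C_0$-semigroups, and the resulting operator $g_t(A)$ is precisely the convolution operator $Z_{\nu_t}$, so the Yosida condition transfers immediately. For $(b) \Rightarrow (c)$ two facts are combined. First, the Yosida condition implies that for some $\beta > 0$, $e^{-\beta t} Z_{\nu_t}$ extends to a bounded holomorphic semigroup on $L^1(\mathbb{R}_+^n)$, so $t \mapsto Z_{\nu_t}$ is operator-norm analytic on $(0,\infty)$ with $\|(d/dt) Z_{\nu_t}\|_{B(L^1)} = O(t^{-1})$. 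Second, $\mu \mapsto Z_\mu$ is an isometry from $\mathcal{M}^b(\mathbb{R}_+^n)$ into $B(L^1(\mathbb{R}_+^n))$. Together these force $(\nu_{t+h}-\nu_t)/h$ to be $\mathcal{M}^b$-Cauchy as $h \to 0$, with total-variation limit $\nu_t'$ satisfying $\|\nu_t'\|_{\mathcal{M}^b} = O(t^{-1})$ and depending continuously on $t$.

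The implication $(c) \Rightarrow (d)$ is trivial, since norm convergence in $\mathcal{M}^b$ implies weak convergence. For $(d) \Rightarrow (a)$, fix commuting bounded $C_0$-semigroups $T_j$ on a Banach space $X$, $x \in X$, and $\varphi \in X^*$. Since $u \mapsto \varphi(T_A(u)x)$ lies in $C^b(\mathbb{R}_+^n)$, hypothesis (d) yields
\[
\varphi\!\left(\frac{g_{t+h}(A)x - g_t(A)x}{h}\right) \longrightarrow \int \varphi(T_A(u)x)\, d(w\nu_t')(u) \quad \text{as } h \to 0.
\]
The $X$-valued integrand $u \mapsto T_A(u)x$ is strongly continuous and bounded by $M\|x\|$, hence Bochner integrable against the finite signed measure $w\nu_t'$; the resulting vector $y_t(x) := \int T_A(u)x\, d(w\nu_t')(u) \in X$ satisfies $\|y_t(x)\| \le M \|w\nu_t'\|_{\mathcal{M}^b} \|x\|$. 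The display thus shows that $(g_{t+h}(A)x - g_t(A)x)/h \to y_t(x)$ weakly in $X$. The classical theorem of Phillips that the weak and strong infinitesimal generators of a $C_0$-semigroup coincide now forces $g_t(A)x \in D(\psi(A))$ and $\psi(A) g_t(A) x = y_t(x)$. Hence $\|\psi(A) g_t(A)\|_{B(X)} \le M \|w\nu_t'\|_{\mathcal{M}^b} = O(t^{-1})$, which is (a).

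The main obstacle is the final implication $(d) \Rightarrow (a)$: passing from the weak differentiability of the scalar-valued measures $\nu_t$ to the Yosida condition for arbitrary operator semigroups on general Banach spaces depends essentially on the nontrivial Phillips identification of the weak and strong generators, and on the fact that $u \mapsto T_A(u)x$ is strongly measurable with separable essential range (so that the Bochner integral $y_t(x)$ actually lives in $X$, not merely in $X^{**}$). A secondary technical point in $(b) \Rightarrow (c)$ is establishing the isometry $\|Z_\mu\|_{B(L^1(\mathbb{R}_+^n))} = \|\mu\|_{\mathcal{M}^b(\mathbb{R}_+^n)}$ for measures supported in the positive orthant, rather than the familiar version on all of $\mathbb{R}^n$.
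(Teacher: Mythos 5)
Your proposal is correct and follows essentially the same route as the paper: the same cycle $(a)\Rightarrow(b)\Rightarrow(c)\Rightarrow(d)\Rightarrow(a)$, the specialization to the left-translation semigroup on $L^1(\mathbb{R}^n_+)$, the isometry $\nu\mapsto Z_\nu$ combined with Yosida's theorem for $(b)\Rightarrow(c)$, and the pairing of $w\nu_t'$ with $u\mapsto l(T_A(u)x)$ for $(d)\Rightarrow(a)$. Your explicit appeal to the coincidence of the weak and strong generators in the last step is a welcome justification of a point the paper's computation leaves implicit.
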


\begin{proof}
$(a) \Rightarrow (b)$ This is obvious, since $Z_{\nu_t}=\langle \nu_t,T\rangle$ where $T$ stands for the left translation  semigroup in  $L^1(\mathbb{R}^n_+)$.

$(b) \Rightarrow (c)$  Since  $Z_{\nu_t}$ satisfies the  condition (Y) in  $L^1(\mathbb{R}^n_+)$, the semigroup $U(t)=e^{-\beta t}Z_{\nu_t}$ is holomorphic n  $L^1(\mathbb{R}^n_+)$ by the Yosida theorem. Then the semigroup $Z_{\nu_t}$ is norm differentiable in  $L^1(\mathbb{R}^n_+)$ for $t>0$, as well.
Due to the isometric isomorphism $\nu\mapsto Z_\nu$, ${\mathcal M}^b(\mathbb{R}_+^n)\to B(L^1(\mathbb{R}^n_+))$ we conclude that 
\begin{equation}\label{deriv}
\nu'_t=\lim\limits_{\Delta t\to 0+}\frac{\nu_{t+\Delta t}-\nu_t}{\Delta t}
\end{equation}
in the  ${\mathcal M}^b(\mathbb{R}_+^n)$ norm and 
\begin{equation*}
\|\nu'_t\|_{{\mathcal M}^b}=\|Z_{\nu'_t}\|_{B(L^1)}.
\end{equation*}
Moreover, since the semigroup $Z_{\nu_t}$ satisfies the Yosida condition (Y), we have
$\limsup\limits_{t\to 0+}(t\|\nu'_t\|_{{\mathcal M}^b})<\infty$,
and thus $\|\nu_t'\|_{{\mathcal M}^b}=O(t^{-1})$ as $t\to 0+$.

Now since $\nu_t'\in {\mathcal M}^b(\mathbb{R}_+^n)$ for all $t>0$, we have  $\nu_{t/2}'\ast \nu_{t/2}'\in {\mathcal M}^b(\mathbb{R}_+^n)$, too. 
Taking into account that $({\mathcal L}\nu_t)(-z)=e^{t\psi(z)}$ we deduce that  $\nu_{t/2}'\ast \nu_{t/2}'=\nu_t^{\prime\prime}$ (the Laplace transform of the both sides of the last equality is equal to $\psi(z)^2e^{t\psi(z)}$). So $\|\nu_t^{\prime\prime}\|_{{\mathcal M}^b}\le \|\nu_{t/2}^{\prime}\|_{{\mathcal M}^b}^2$. In particular, the function $t\mapsto \|\nu_{t}^{\prime}\|_{{\mathcal M}^b}$ is bounded and continuous on  each interval $(t_1,t_2)\subset \mathbb{R}_+$.

$(c) \Rightarrow (d)$ This is obvious.

$(d) \Rightarrow (a)$ Note that for every $x\in X$ and every continuous linear functional $l\in X^*$ the function $u\mapsto l(T(u)x)$ belongs to $C^b(\mathbb{R}_+^n)$. Then 
\begin{align*}
l\left(\frac{d}{dt}g_t(A)x\right)&=\frac{d}{dt}l\left(g_t(A)x\right)=\lim_{\Delta t\to 0}\frac{1}{\Delta t}(l\left(g_{t+\Delta t}(A)x\right)-l\left(g_t(A)x\right))\\[5pt]
&=w\lim_{\Delta t\to 0}\frac{1}{\Delta t}\int\limits_{\mathbb{R}_+^n}l(T(u)x)d(\nu_{t+\Delta t}-\nu_t)(u)\\[5pt]
&=l\left(w\lim_{\Delta t\to 0}\frac{1}{\Delta t}\int\limits_{\mathbb{R}_+^n}T(u)xd(\nu_{t+\Delta t}-\nu_t)(u)\right)\\[5pt]
&=l\left(\int\limits_{\mathbb{R}_+^n}T(u)xd\left(w\nu'_t\right)(u)\right).
\end{align*}
Therefore
\begin{equation*}
\left\|\frac{d}{dt}g_t(A)x\right\|_{B(X)}=\left\|\int\limits_{\mathbb{R}_+^n}T(u)xd\left(w\nu'_t\right)(u)\right\|\le M\left\|w\nu'_t\right\|_{{\mathcal M}^b}\|x\|,\quad t>0.
\end{equation*}
Since $\left\|w\nu'_t\right\|_{{\mathcal M}^b}=O(t^{-1})$ as $t\to 0+, (a)$ follows.
\end{proof}

To compute the  weak derivative $w\nu'_t$ we introduce the following notion.
Consider the vector space of exponential polynomials on $\mathbb{R}^n_+$
$$
E(\mathbb{R}^n_+):=\left\{p(r)=\sum_jc_je^{s_j\cdot r}: c_j\in \mathbb{C}, s_j\in (-\infty,0)^n\right\}
$$
equipped  with the ${\rm sup}$ norm.

\begin{definition}\label{bt}
 For $p\in E(\mathbb{R}^n_+)$, $t>0$ let
 $$
b_t(p):= \int\limits_{\mathbb{R}_+^n} \int\limits_{\mathbb{R}_+^n}p(r)d_r(\nu_t(r-u)-\nu_t(r))d\mu(u).
$$
\end{definition}

Below we identify a finite measure on  $\mathbb{R}^n_+$ with the corresponding integral, i.~e. with the corresponding bounded linear functional on  $C_0(\mathbb{R}^n_+)$.

\begin{theorem}\label{d/dt=bt}
If the map $t\mapsto \nu_t$, $\mathbb{R}_+\to {\mathcal M}^b(\mathbb{R}_+^n)$ is weakly differentiable for $t>0$ then
the linear functional $b_t$ extends uniquely from 
$E(\mathbb{R}^n_+)$ to some bounded Radon measure on $\mathbb{R}_+^n$ and $w\nu'_t=b_t$.
\end{theorem}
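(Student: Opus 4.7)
The strategy is to verify the identity $b_t=w\nu'_t$ first on the subspace $E(\mathbb{R}_+^n)$ by an explicit Laplace--transform computation, and then to pass to a bounded Radon measure on $\mathbb{R}_+^n$ by Stone--Weierstrass density. First I fix $p(r)=e^{s\cdot r}$ with $s\in(-\infty,0)^n$ and interpret $d_r\nu_t(r-u)$ as the translate of $\nu_t$ by $u\in\mathbb{R}_+^n$, so that $\int f(r)\,d_r\nu_t(r-u)=\int f(r+u)\,d\nu_t(r)$. Then the inner integral in Definition \ref{bt} collapses to $(e^{s\cdot u}-1)\mathcal{L}\nu_t(-s)=(e^{s\cdot u}-1)e^{t\psi(s)}$; since $c_0=c_1=0$, the representation \eqref{psi} gives $\int(e^{s\cdot u}-1)\,d\mu(u)=\psi(s)<\infty$, which both licenses the Fubini swap and yields
\[
b_t(p)=\psi(s)\,e^{t\psi(s)}.
\]
On the other hand, $p\in C^b(\mathbb{R}_+^n)$, so applying weak differentiability to this test function gives $\int p\,d(w\nu'_t)=\frac{d}{dt}e^{t\psi(s)}=\psi(s)e^{t\psi(s)}$. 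By linearity, $b_t(p)=\int p\,d(w\nu'_t)$ for every $p\in E(\mathbb{R}_+^n)$, and in particular $|b_t(p)|\le\|w\nu'_t\|_{\mathcal{M}^b}\|p\|_\infty$, so $b_t$ is a bounded linear functional on $(E(\mathbb{R}_+^n),\|\cdot\|_\infty)$.

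The second step is to extend this functional to a Radon measure on $\mathbb{R}_+^n$, for which I would invoke Stone--Weierstrass on the locally compact Hausdorff space $\mathbb{R}_+^n$. Each $e^{s\cdot r}$ with $s\in(-\infty,0)^n$ vanishes at infinity, so $E(\mathbb{R}_+^n)\subset C_0(\mathbb{R}_+^n)$; the collection is a subalgebra (products $e^{s\cdot r}e^{s'\cdot r}=e^{(s+s')\cdot r}$ again have strictly negative exponents), separates points, and nowhere vanishes, hence it is dense in $C_0(\mathbb{R}_+^n)$ in the sup norm. Consequently $b_t$ extends uniquely to a bounded linear functional on $C_0(\mathbb{R}_+^n)$, represented by Riesz--Markov as a bounded Radon measure on $\mathbb{R}_+^n$. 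Agreement with $w\nu'_t$ on the dense subspace $E(\mathbb{R}_+^n)$ then forces the two measures to coincide.

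The only subtle points are the correct reading of the translated-measure notation $\nu_t(\cdot-u)$ and the justification of Fubini in the opening computation; both are controlled by the L\'evy integrability $\int(e^{s\cdot u}-1)\,d\mu(u)=\psi(s)$ that follows from the normalization $c_0=c_1=0$ in \eqref{psi}. Once these technicalities are handled, what remains is a standard density argument together with Riesz representation.
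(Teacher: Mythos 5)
Your proof is correct and follows essentially the same route as the paper: evaluate $b_t$ on the exponentials $e^{s\cdot r}$ via the translation identity to get $\psi(s)e^{t\psi(s)}$, match this with $\frac{d}{dt}\mathcal{L}\nu_t(-s)$ tested against the same functions, and conclude by Stone--Weierstrass density of $E(\mathbb{R}^n_+)$ in $C_0(\mathbb{R}^n_+)$ together with the Riesz representation. Your version is if anything slightly more careful than the paper's (explicit norm bound $|b_t(p)|\le\|w\nu'_t\|_{\mathcal{M}^b}\|p\|_\infty$ and verification of the Stone--Weierstrass hypotheses), but the argument is the same.
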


\begin{proof} 
 For $p\in E(\mathbb{R}^n_+)$, $p(r)=\sum_jc_je^{s_j\cdot r}$, $t>0$ we have, since $\nu_t$ is concentrated on $\mathbb{R}^n_+$,
\begin{align*}
 \int\limits_{\mathbb{R}_+^n}p(r)d_r(\nu_t(r-u)-\nu_t(r))&= \int\limits_{\mathbb{R}_+^n}p(r)d_r\nu_t(r-u)- \int\limits_{\mathbb{R}_+^n}p(r)d_r\nu_t(r)\\[5pt]
&=\int\limits_{-u+\mathbb{R}_+^n}p(r)d_r\nu_t(r)- \int\limits_{\mathbb{R}_+^n}p(r)d_r\nu_t(r)\\[5pt]
&=\int\limits_{\mathbb{R}_+^n}(p(r+u)-p(r))d_r\nu_t(r)\\[5pt]
&=\int\limits_{\mathbb{R}_+^n}\sum_jc_j e^{s_j\cdot r}(e^{s_j\cdot u}-1)d_r\nu_t(r)\\[5pt]
&=\sum_j c_j\left( e^{s_j\cdot u}-1\right)e^{t\psi(s_j)}.
\end{align*}
Therefore
\begin{align*}
b_t(p)&=\int\limits_{\mathbb{R}_+^n}\left(\sum_j c_j\left( e^{s_j\cdot u}-1\right)e^{t\psi(s_j)}\right)d\mu(u)\\[5pt]
&=\sum_j c_je^{t\psi(s_j)}\int\limits_{\mathbb{R}_+^n}\left(e^{s_j\cdot u}-1\right)d\mu(u)\\[5pt]
&=\sum_j c_je^{t\psi(s_j)}\psi(s_j).
\end{align*}

On the other hand,
\begin{align*}
\nu_t(p)\equiv \int\limits_{\mathbb{R}_+^n}p(r)d\nu_t(r)=\sum_j c_je^{t\psi(s_j)}.
\end{align*}
It follows that 
\begin{align}\label{eq}
\frac{d}{dt}\nu_t(p)=\sum_j c_je^{t\psi(s_j)}\psi(s_j)=b_t(p).
\end{align}
Note that $ E(\mathbb{R}^n_+)$ is a dense sub-algebra of $C_0(\mathbb{R}^n_+)$ by the Stone-Weierstrass theorem. So, if  $w\nu'_t$ exists, then by \eqref{eq} the linear functional $b_t$ extends uniquely from 
$E(\mathbb{R}^n_+)$ and its extension equals to  $w\nu'_t$.


\end{proof}

\vskip7pt

{\centering
\section{The necessary  conditions}
}

For $z\in \mathbb{ C}^n$ we put $\|z\|:=\max\limits_{1\le j\le n}|z_j|$, $\mathbb{D}^n:=\{z:\|z\|<1\}$, $\Pi^n_{-}:=\{z\in \mathbb{C}^n: \mathrm{Re}z_j<0,  j=1,\dots, n\}$, and  $\overline{\Pi^n_{-}}$ the closure of $\Pi^n_{-}$.

\begin{theorem}\label{necessary} (cf. \cite{CK}).
If the conditions $(a)-(c)$ of Theorem \ref{criteria} hold then $\psi$ maps the region $\Pi^n_{-}$ into a truncated sector 
$$
S(\theta,\beta):= (\beta+\{|\arg(-z)|<\theta\})\cap \Pi^1_-.
$$
of opening $2\theta<\pi$ and there exist constants $K>0$ and $\gamma$, $\gamma\in (0,1)$, such that 
\begin{equation*}
|\psi(z)|\le K\|z\|^\gamma \quad \mbox{   {\rm for all}   }  z\in \overline{\Pi^n_{-}}, \|z\|\ge 1.
\end{equation*}

\end{theorem}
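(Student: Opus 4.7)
\medskip
\noindent\textbf{Proof plan.}

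The strategy is to adapt the one-dimensional Carasso--Kato argument by testing condition~(b) against the translation semigroup on $L^1(\mathbb{R}_+^n)$. The map $\mu\mapsto Z_\mu$ is an isometric algebra embedding of $\mathcal{M}^b(\mathbb{R}_+^n)$ into $B(L^1(\mathbb{R}_+^n))$, so its image is a norm-closed subalgebra. By Yosida's theorem, condition~(b) provides a $\beta>0$ such that $e^{-\beta t}Z_{\nu_t}$ extends analytically to a bounded holomorphic $C_0$-semigroup $(S_t)_{t\in\Sigma(\theta)}$ of some angle $\theta\in(0,\pi/2]$. Since each $S_t$ commutes with translations and lies in the closed image of $\mathcal{M}^b$, one has $S_t=Z_{\tilde\nu_t}$ for a unique $\tilde\nu_t\in\mathcal{M}^b(\mathbb{R}_+^n)$, and $t\mapsto\tilde\nu_t$ is holomorphic in $\mathcal{M}^b$, uniformly bounded on each subsector $\Sigma(\theta')$ with $\theta'<\theta$.

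Next I would observe that the Laplace transform is a continuous algebra morphism on $\mathcal{M}^b$, so the identity $\mathcal{L}\tilde\nu_t(w)=e^{t(\psi(-w)-\beta)}$, valid for real $t>0$ and $w\in\overline{\Pi^n_+}$, extends by unique analytic continuation in $t$ to all of $\Sigma(\theta)$. Therefore
\[
\bigl|e^{t(\psi(-w)-\beta)}\bigr|\le\|\tilde\nu_t\|_{\mathcal{M}^b}\le M\qquad(t\in\Sigma(\theta'),\ w\in\overline{\Pi^n_+}).
\]
Writing $s:=-w\in\overline{\Pi^n_-}$ and $t=re^{i\phi}$, letting $r\to\infty$ with $\phi$ ranging over $(-\theta',\theta')$ forces $\mathrm{Re}(e^{i\phi}(\psi(s)-\beta))\le 0$ for every such $\phi$. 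Equivalently, $\psi(s)-\beta$ lies in the closed sector $\{w\in\mathbb{C}:|\arg(-w)|\le\theta_0\}$ with $\theta_0:=\pi/2-\theta'<\pi/2$. Combining this with $\mathrm{Re}\,\psi(s)\le 0$ (which follows from $|\mathcal{L}\nu_t(-s)|\le\nu_t(\mathbb{R}_+^n)\le 1$ for real $t>0$) yields $\psi(\overline{\Pi^n_-})\subset S(\theta_0,\beta)$ and establishes the sector inclusion.

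For the polynomial growth, set $\Phi(z):=\beta-\psi(z)$, so $\Phi$ is holomorphic on $\Pi^n_-$ with values in the right-opening sector $\{w:|\arg w|\le\theta_0\}$. With $\alpha:=\pi/(2\theta_0)>1$, the principal branch $\Phi^\alpha$ is holomorphic on $\Pi^n_-$ and satisfies $\mathrm{Re}\,\Phi^\alpha\ge 0$. Restricting $\Phi^\alpha$ to a complex line through a base point $s_0\in(-\infty,0)^n$ where $\Phi(s_0)>0$, and applying the one-variable Herglotz--Nevanlinna representation, gives linear growth $|\Phi^\alpha(z)|=O(\|z\|)$ along that direction; to promote this to a uniform estimate on $\overline{\Pi^n_-}$ I would use that $\log|\Phi^\alpha|$ is pluriharmonic with bounded harmonic conjugate $\arg\Phi^\alpha\in[-\pi/2,\pi/2]$, and apply Harnack's inequality coordinatewise. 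This yields $|\Phi(z)|=O(\|z\|^{1/\alpha})$, so $|\psi(z)|\le\beta+|\Phi(z)|\le K\|z\|^\gamma$ for $\|z\|\ge 1$ with $\gamma:=2\theta_0/\pi\in(0,1)$.

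The main obstacle will be the last step: upgrading the one-variable Pick--Nevanlinna linear-growth estimate to a bound uniform over all directions in $\overline{\Pi^n_-}$, in particular controlling the implied constants as $z$ approaches the distinguished boundary $\bigcup_{j=1}^{n}\{z_j=0\}$, where $\Phi^\alpha$ can approach the imaginary axis and the Harnack constants are most delicate.
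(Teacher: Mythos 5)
Your first part (the sector inclusion) follows essentially the same route as the paper: Yosida's theorem gives a bounded holomorphic extension $\zeta\mapsto e^{-\beta\zeta}Z_{\nu}(\zeta)$ on a sector, the isometric embedding $\nu\mapsto Z_\nu$ pulls this back to a holomorphic $\mathcal{M}^b$-valued map, and applying the Laplace transform and letting $|t|\to\infty$ inside the sector forces $\beta-\psi(z)$ into a sector of half-angle $\pi/2-\theta'$. That part is sound.

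The growth estimate is where you diverge from the paper, and the obstacle you flag at the end is a genuine gap, not a technicality. The theorem asserts $|\psi(z)|\le K\|z\|^{\gamma}$ for \emph{all} $z\in\overline{\Pi^n_-}$ with $\|z\|\ge 1$, including purely imaginary points. A Herglotz--Nevanlinna representation of $\Phi^\alpha$ restricted to a complex line controls growth only in the interior of the half-plane, with constants that blow up like the reciprocal of the distance to the boundary (already in one variable, $z\mapsto 1/(z-i)$ maps the right half-plane to itself and is unbounded near the boundary point $i$, so no sector-valued hypothesis plus Harnack can yield a uniform bound up to $\partial\Pi^n_-$). The paper circumvents this entirely: after a Schwarz-lemma argument on the polydisc (applied to $\bigl(\prod_j z_j^{2\theta/(\pi n)}\bigr)\xi(z)/\xi(1,\dots,1)$ composed with the Cayley map) it obtains the bound $-\psi(-x)\le A\|x\|^{\gamma}$ only for \emph{real} $x$ with $x_j\ge 1$; it then feeds this real-diagonal estimate into the L\'evy--Khinchine representation $\psi(z)=\int(e^{z\cdot u}-1)\,d\mu(u)$ to derive the two measure bounds $\int_{\{c\sum u_j\le 1\}}\sum_j u_j\,d\mu\le (A/\sigma)c^{\gamma-1}$ and $\mu(\{c\sum u_j>1\})\le(A/\sigma)c^{\gamma}$, and finally uses $|1-e^{z\cdot u}|\le\min\{\|z\|\sum_j u_j,\,2\}$ with $c=\|z\|$ to get $|\psi(z)|\le 3(A/\sigma)\|z\|^{\gamma}$ on all of $\overline{\Pi^n_-}$. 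The key idea your plan is missing is precisely this use of the integral representation of $\psi\in\mathcal{T}^0_n$ to transport a real-axis estimate to the closed poly-half-plane; without some such structural input the uniform boundary bound is not reachable by pluriharmonicity or Harnack arguments.
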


\begin{proof}
Step 1. This step use arguments that are  similar to the  arguments from the corresponding  part of the proof of Theorem 4 in \cite{CK}. Fix a number $\beta>0$. Due to the condition $(b)$
of Theorem \ref{criteria}, the semigroup  $e^{-\beta t}Z_{\nu_t}$ on  $L^1(\mathbb{R}^n_+)$ can be extended to a bounded  holomorphic
semigroup   in a sector $\Sigma(\theta)$ ($0<\theta<\pi/2$).
We denote this analytic continuation by $e^{-\beta z}Z_{\nu}(z)$. Thus, $e^{-\beta t}Z_{\nu}(t)=e^{-\beta t}Z_{\nu_t}$.

In \cite{CK} was noticed  that the condition (Y) together with $\|Z_{\nu_t}\|_{B(L^1)}\le M$ imply
\begin{equation*}
 C_\beta:=\sup\limits_{t>0}{t(\|(e^{-\beta t}Z_{\nu}(t))'\|_{B(L^1)}<\infty}.
\end{equation*}
Moreover, since   $U^{(n)}(t)=(U'(t/n))^n$   for every holomorphic semigroup $U$  (see, e.g., \cite[p. 98]{EngNag}), we have
\begin{eqnarray*}
\|(e^{-\beta t}Z_{\nu}(t))^{(n)}\|_{B(L^1)}&\le &\|(e^{-\beta t/n}Z_{\nu}(t/n))^{\prime}\|_{B(L^1)}^n\\
&\le&(nt^{-1}C_\beta)^n\le n!(et^{-1}C_\beta)^n,\quad t>0, n\ge 1.
\end{eqnarray*}
For $n=0$ the last estimate is true, too, because $\nu_t$ is a sub-probability measure.

Now fix $q\in (0,1)$, $\theta\in (0,\arctan(q/(eC_\beta))$ and consider the Taylor series   
\begin{eqnarray*}
e^{-\beta z}Z_{\nu}(z)=\sum_{n=0}^\infty \frac{1}{n!}\left(e^{-\beta \mathrm{Re}z}Z_{\nu}(\mathrm{Re}z)\right)^{(n)}(\imath \mathrm{Im}z)^n.
\end{eqnarray*}
This series converges uniformly in $B(L^1(\mathbb{R}^n_+))$ for $\mathrm{Re}z>0$, $|\arg(z)|<\theta$, since 
\begin{eqnarray*}
\left\|\frac{1}{n!}\left(e^{-\beta \mathrm{Re}z}Z_{\nu}(\mathrm{Re}z)\right)^{(n)}(\imath \mathrm{Im}z)^n\right\|_{B(L^1)}\le (eC_\beta)^n
\left(\frac{|\mathrm{Im}z|}{\mathrm{Re}z}\right)^n\le q^n.
\end{eqnarray*}
This estimate shows also that for $z\in \Sigma(\theta/2)$
\begin{eqnarray*}
\left\|e^{-\beta z}Z_{\nu}(z)\right\|_{B(L^1)}\le\frac{1}{1-q}.
\end{eqnarray*}

Now the isometric isomorphism $\nu\mapsto Z_\nu$, ${\mathcal M}^b(\mathbb{R}_+^n)\to B(L^1(\mathbb{R}^n_+))$ implies that the map  $t\mapsto\nu_t$, $(0,\infty)\to {\mathcal M}^b(\mathbb{R}_+^n)$ has the analytic continuation  $\zeta\mapsto\nu(\zeta)$,  $\Sigma(\theta)\to {\mathcal M}^b(\mathbb{R}_+^n)$ and
\begin{eqnarray*}
\|e^{-\beta \zeta}\nu(\zeta)\|_{{\mathcal M}^b}\le\frac{1}{1-q},\quad \zeta\in \Sigma(\theta).
\end{eqnarray*}
Note that by analyticity  \eqref{laplace} is valid for $t\in \Sigma(\theta)$. The application of the Laplace transform to the  measure $e^{-\beta t}\nu(t)$ yields 
\begin{eqnarray}\label{est}
|e^{-t(\beta -\psi(w))}|\le\|e^{-\beta t}\nu(t)\|_{{\mathcal M}^b}\le\frac{1}{1-q},\quad w\in \overline{\Pi^n_{-}}, \  t\in \Sigma(\theta).
\end{eqnarray}

 Let $\overline{\Pi^n}:=-\overline{\Pi^n_{-}}$,  $\Pi^n:=-\Pi^n_{-}$, and $\xi(z):=\beta -\psi(-z)$, where $z\in \overline{\Pi^n}$. The inequality \eqref{est} implies that $\xi$ maps 
$\Pi^n$ into $\Sigma((\pi/2-\theta)$ (the boundedness of $t\mapsto e^{-t\zeta}$ on the sector $\Sigma(\theta)$ implies $\zeta\in \Sigma((\pi/2-\theta)$). It follows that  $\psi$ maps the region $\Pi^n_{-}$ into a truncated sector $(\beta- \Sigma((\pi/2-\theta))\cap \overline{\Pi_1^-}$ of opening $<\pi$.

Step 2. Now let
\begin{eqnarray*}
f(z):=\left(\prod_{j=1}^nz_j^{\frac{2\theta}{\pi n}}\right)\xi(z)/\xi(1,\dots, 1),
\end{eqnarray*}
and
\begin{eqnarray*}
z=(z_j)_{j=1}^n:=\left(\frac{1+w_j}{1-w_j}\right)_{j=1}^n,\ h(w):=f(z),\ g(w)=\frac{h(w)-1}{h(w)+1},\quad w=(w_j)_{j=1}^n\in \mathbb{D}^n.
\end{eqnarray*}
Since $f:\Pi^n\to\Pi^1$, $f(1,\dots,	1)=1$, we get $h:\mathbb{D}^n\to \Pi^1$, $h(0)=1$, and thus $g:\mathbb{D}^n\to \mathbb{D}$, $g(0)=0$.
By the Scwatrz Lemma
\begin{eqnarray*}
|g(w)|\le\|w\|,\quad w\in \mathbb{D}^n.
\end{eqnarray*}
But we have $h(w)=(1+g(w))/(1-g(w))$, and so 
\begin{eqnarray*}
|f(z)|=|h(w)|\le \frac{1+|g(w)|}{1-|g(w)|}\le  \frac{1+\max_j|w_j|}{1-\max_j|w_j|},\quad w\in \mathbb{D}^n.
\end{eqnarray*}

In other words,
\begin{eqnarray*}
|f(z)|\le   \frac{1+\max_j\left|\frac{z_j-1}{z_j+1}\right|}{1-\max_j\left|\frac{z_j-1}{z_j+1}\right|},\quad z\in \Pi^n.
\end{eqnarray*}

If we consider $z_j=x_j\ge 1$ for  all $j=1,\dots,n$, then $f(x)>0$ and 
\begin{eqnarray*}
f(x)\le   \frac{1+\max_j\frac{x_j-1}{x_j+1}}{1-\max_j\frac{x_j-1}{x_j+1}}=\frac{1+\frac{x_k-1}{x_k+1}}{1-\frac{x_k-1}{x_k+1}}=x_k
\end{eqnarray*}
for some $k$, $1\le k\le n$. Hence, $0<f(x)\le \max_k x_k$. This inequality means that for all $\beta>0$
\begin{eqnarray*}
\prod_{j=1}^nx_j^{\frac{2\theta}{\pi n}}(\beta-\psi(-x))\le (\beta-\psi(-1,\dots,-1))\max_k x_k
\end{eqnarray*}
Putting here $\beta\to 0$, we  get
\begin{eqnarray}\label{real}
-\psi(-x)\le A(\max_k x_k)\prod_{j=1}^nx_j^{-\frac{2\theta}{\pi n}}\le A (\max_k x_k)(\max_k x_k)^{-\frac{2\theta}{\pi}}=A\|x\|^{\gamma},
\end{eqnarray}
where $A=-\psi(-1,\dots,-1)$, $\gamma=1-\frac{2\theta}{\pi}\in (0,1)$, and all $x_j$ are real, $\|x\|\ge 1$.

Step 3. To finish the proof, we shall use the representation \eqref{psi} with $c_0=c_1=0$ to obtain the similar estimate for $z\in \overline{\Pi^n_-}$. 

First note that for $a, u\in \mathbb{R}_+^n$, $\sigma=1-e^{-1}$
\begin{equation*}
1-e^{-a\cdot u}\ge \sigma,\  \mbox{ as } a\cdot u\ge 1,\quad  1-e^{-a\cdot u}\ge \sigma a\cdot u \mbox { as } a\cdot u\le 1.
\end{equation*}
Therefore,  by \eqref{est} we have for $a\in \mathbb{R}_+^n$ with $\|a\|\ge 1$ 
\begin{align*}
A\|a\|^{\gamma}&\ge -\psi(-a)=\int_{\mathbb{R}_+^n}(1-e^{-a\cdot u})d\mu(u)\\[5pt]
&=\left(\int_{a\cdot u\le 1}+\int_{a\cdot u> 1}\right)(1-e^{-a\cdot u})d\mu(u)\\[5pt]
&\ge \sigma \int_{\{a\cdot u\le 1\}}a\cdot u d\mu(u)+\sigma \int_{\{a\cdot u> 1\}} d\mu(u).
\end{align*}
In particular, if we take $\forall j a_j=c$, $c\ge 1$, then
\begin{equation*}
Ac^{\gamma}\ge \sigma c\int_{\{c\sum_ju_j\le 1\}}\sum_{j=1}^nu_j d\mu(u)+\sigma \int_{\{c\sum_ju_j> 1\}} d\mu(u).
\end{equation*}
It follows that for all $c\ge 1$
\begin{equation}\label{36}
\int_{\{c\sum_ju_j\le 1\}}\sum_{j=1}^nu_j d\mu(u)\le \frac{A}{\sigma}c^{\gamma-1},\quad \int_{\{c\sum_ju_j> 1\}} d\mu(u)\le \frac{A}{\sigma}c^{\gamma}.
\end{equation}

Next, since for all  $z\in \overline{\Pi^n_-}$, $u\in \mathbb{R}_+^n$ we have
\begin{equation*}
\left|1-e^{z\cdot u}\right|\le\min\{z\cdot u,2\}\le\min\{\|z\|\sum_{j=1}^n u_j,2\}.
\end{equation*}
This estimate and \eqref{36} imply for all $z\in \overline{\Pi^n_-}$, $c\ge 1$
\begin{align*}
|\psi(z)|&\le \int_{\mathbb{R}_+^n}|1-e^{z\cdot u}|d\mu(u)\\[5pt]
&=\left(\int_{\{c\sum_ju_j\le 1\}}+\int_{\{c\sum_ju_j> 1}\right)|1-e^{z\cdot u}|d\mu(u)\\[5pt]
&\le \|z\|\int_{\{c\sum_ju_j\le 1\}}\sum_{j=1}^nu_j d\mu(u)+2\int_{\{c\sum_ju_j> 1\}} d\mu(u)\\[5pt]
&\le\|z\|\frac{A}{\sigma}c^{\gamma-1}+2\frac{A}{\sigma}c^{\gamma}.
\end{align*}
Putting here $c=\|z\|$, we get $|\psi(z)\le 3\frac{A}{\sigma}\|z\|^\gamma$  for $\|z\|\ge 1$. This completes the proof.
\end{proof}

{\centering
\section{Sufficient conditions}
}

It is known that the condition
\begin{equation*}
\limsup\limits_ {t \to 0+} \|I-T(t) \| <2
\end{equation*}
is sufficient for the analyticity of a one parametric
$ C_0$-semigroup $T $ in a Banach space $ X$. Although in general
the converse  fails, this   condition  is necessary for
analyticity of $ T$ if $X $ is locally convex.

The next theorem from \cite{SMZ2011} generalizes this result.

\begin{theorem}
Assume that $ C_0 $-semigroups
$ T_j $ satisfy the conditions  $ \|T_j (t) \| \leq M_j \ (j = 1, \dots,
n) $ and
\begin{equation*}
\sum\limits_ {j = 1}^n C_{j} \limsup\limits_ {t \to
0+} \| I-T_j (t) \| <2,
\end{equation*}
where $ C_j = \prod_{k = 1}^{j-1} M_k $ as $ j> 1$, $C_1 =  1$. Then for each
function $ \psi \in \mathcal{T}_n $ the operator $ \psi(A) $ is a
generator of a holomorphic semigroup.
\end{theorem}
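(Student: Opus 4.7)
My plan is to verify the classical sufficient condition $\limsup\limits_{t\to 0+}\|I-g_t(A)\|_{B(X)}<2$ for the subordinated $C_0$-semigroup $g(A)$, which then immediately forces $g(A)$ to extend to a holomorphic semigroup and hence $\psi(A)$ to generate one. Without loss of generality I assume $\psi\in\mathcal{T}_n^0$, so that each $\nu_t$ is a probability measure. Since $\mathcal{L}\nu_t(-s)=e^{t\psi(s)}\to 1$ pointwise as $t\to 0+$, the continuity theorem for the Laplace transform (as already invoked in Theorem \ref{weakcont}) gives $\nu_t\to\delta_0$ weakly.

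The engine of the argument is a telescoping decomposition exploiting the commutativity of the $T_j$. Setting $S_0:=I$ and $S_j:=T_1(u_1)\cdots T_j(u_j)$, one has
$$I-T_A(u)=\sum_{j=1}^n S_{j-1}\bigl(I-T_j(u_j)\bigr),$$
and since $\|S_{j-1}\|_{B(X)}\le C_j$ by hypothesis, this gives $\|I-T_A(u)\|_{B(X)}\le\sum_{j=1}^n C_j\|I-T_j(u_j)\|_{B(X)}$. Integrating against the probability measure $\nu_t$ and using $g_t(A)=\int T_A(u)\,d\nu_t(u)$, I obtain the master estimate
$$\|I-g_t(A)\|_{B(X)}\le\sum_{j=1}^n C_j\int_{\mathbb{R}_+^n}\|I-T_j(u_j)\|_{B(X)}\,d\nu_t(u).$$

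To pass to the limit $t\to 0+$ in each scalar integral, set $L_j:=\limsup\limits_{s\to 0+}\|I-T_j(s)\|_{B(X)}$, fix $\epsilon>0$ and choose $\delta>0$ so that $\|I-T_j(s)\|_{B(X)}\le L_j+\epsilon$ for $0\le s<\delta$. Splitting the $u_j$-axis at $\delta$: the contribution from $\{u_j<\delta\}$ is at most $L_j+\epsilon$, while the contribution from $\{u_j\ge\delta\}$ is bounded by $(1+M_j)\nu_t(\{u_j\ge\delta\})$, which tends to $0$ as $t\to 0+$ (test the weak convergence $\nu_t\to\delta_0$ against a continuous function with values in $[0,1]$ that vanishes at the origin and equals $1$ on $\{u_j\ge\delta\}$). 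Letting $\epsilon\to 0$ and summing in $j$ yields $\limsup\limits_{t\to 0+}\|I-g_t(A)\|_{B(X)}\le\sum_{j=1}^n C_jL_j<2$, completing the argument.

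The delicate point is precisely the passage to the limit in the scalar integrals: the map $u_j\mapsto\|I-T_j(u_j)\|_{B(X)}$ is only lower semicontinuous, being the supremum of the continuous functions $\|x-T_j(u_j)x\|$ over the unit sphere, so one cannot simply test the weak convergence $\nu_t\to\delta_0$ against it. The splitting at $\delta$, which uses the very definition of $L_j$ near the origin together with the concentration of $\nu_t$ at $0$, is the workaround that bypasses this obstacle.
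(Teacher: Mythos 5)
The paper itself gives no proof of this theorem---it is quoted from \cite{SMZ2011}---so there is no internal argument to compare against; judged on its own, your proof is correct and is the natural one. The telescoping identity $I-T_A(u)=\sum_{j=1}^n S_{j-1}(I-T_j(u_j))$ with $\|S_{j-1}\|\le C_j$, the resulting bound $\|I-g_t(A)\|\le\sum_{j=1}^n C_j\int\|I-T_j(u_j)\|\,d\nu_t(u)$, and the $\delta$-splitting combined with the weak convergence $\nu_t\to\delta_0$ correctly yield $\limsup_{t\to 0+}\|I-g_t(A)\|\le\sum_j C_jL_j<2$, after which the criterion quoted in the paper immediately before the theorem applies to the semigroup $g(A)$, whose generator is $\psi(A)$. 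Your remark on the lower semicontinuity of $u\mapsto\|I-T_j(u_j)\|$ correctly identifies why the splitting is needed. The one step you should tighten is the opening reduction ``without loss of generality $\psi\in\mathcal{T}_n^0$'': this silently discards the terms $c_0$ and $c_1\cdot s$ in the representation \eqref{psi}, and while $c_0$ is a harmless scalar perturbation, the drift $c_1\cdot A$ contributes a factor $T_A(tc_1)$ to $g_t(A)$ and is not negligible for free (for a general bounded semigroup, $c_1\cdot A$ alone need not generate a holomorphic semigroup). The cleanest repair is to drop the reduction altogether: for arbitrary $\psi\in\mathcal{T}_n$ the measure $\nu_t$ is still a sub-probability measure on $\mathbb{R}_+^n$ with total mass $e^{tc_0}\to 1$ and still converges weakly to $\delta_0$ as $t\to 0+$, so your estimate goes through verbatim with the extra additive term $(1-e^{tc_0})\to 0$ accounting for the mass defect. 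With that adjustment the proof is complete.
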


We are going to deduce several sufficient conditions for $\psi\in \mathcal{T}^Y_n$ from Theorem  \ref{criteria}.

In the following we shall denote  by $\mathcal{F}$ the Fourier transform on
$\mathbb{R}^n$ in a sense of distributions,
 and by  ${\cal F}^{-1}$ the inverse of ${\cal F}$. Let
$$
F_t(\lambda)=e^{t\psi(\imath \lambda)}\psi(\imath \lambda)\quad ({\rm Im}\lambda_j\geq 0, t>0).
$$
The restriction $F_t|\mathbb{R}^n$  will be so denoted by $F_t$, too. We put also
$$
J_t(\psi)=\int\limits_{\mathbb{R}^n_+}\left|(\mathcal{F}F_t)(u)\right|du
$$
(if this expression makes sense).

\begin{theorem}\label{sufficient1} Let  the Fourier transform 
$p_t=\mathcal{F}(F_t)$) belongs to $L^1(\mathbb{R}^n)$ and is concentrated on $\mathbb{R}^n_+$. 
If
\begin{align*}
\limsup\limits_{t\to 0+}tJ_t(\psi)<\infty,
\end{align*}
then $\psi\in \mathcal{T}^Y_n$ (i.e. $\psi(A)$ is a generator of a quasibounded holomorphic semigroup $g(A)$ in $X$).
\end{theorem}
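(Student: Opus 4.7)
The plan is to verify condition (d) of Theorem \ref{criteria}: namely, that $t\mapsto\nu_t$ is weakly differentiable on $(0,\infty)$ with $\|w\nu'_t\|_{\mathcal{M}^b}=O(t^{-1})$ as $t\to 0+$. The implication $(d)\Rightarrow(a)$ then yields $\psi\in\mathcal{T}_n^Y$ directly.

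The natural candidate for the weak derivative is the absolutely continuous measure $m_t(du):=p_t(u)\,du$, where $p_t=\mathcal{F}F_t$. By the support hypothesis, $m_t$ is a finite Radon measure on $\mathbb{R}^n_+$ with total variation
\[
\|m_t\|_{\mathcal{M}^b}=\int_{\mathbb{R}^n_+}|p_t(u)|\,du=J_t(\psi).
\]
To establish $w\nu'_t=m_t$, I would argue via Laplace transforms. Since $p_t\in L^1(\mathbb{R}^n)$ is concentrated on $\mathbb{R}^n_+$, a Paley--Wiener/Fourier inversion argument identifies the Laplace transform of $m_t$ on $(-\infty,0)^n$ with the boundary value of the holomorphic extension of $F_t$ into the upper half-space, yielding
\[
\mathcal{L}m_t(-s)=\int_{\mathbb{R}^n_+}e^{s\cdot u}p_t(u)\,du=\psi(s)\,e^{t\psi(s)},\quad s\in(-\infty,0)^n.
\]
Differentiating $\mathcal{L}\nu_t(-s)=e^{t\psi(s)}$ in $t$ produces the same right-hand side. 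Hence the integrated identity $\nu_t=\delta_0+\int_0^t m_\tau\,d\tau$ holds as a Bochner integral in $\mathcal{M}^b(\mathbb{R}_+^n)$ (both sides have the same Laplace transform, which is injective on $\mathcal{M}^b$). This yields norm--and a fortiori weak--differentiability of $t\mapsto\nu_t$ with $w\nu'_t=m_t$.

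With $w\nu'_t=m_t$ in place, the hypothesis $\limsup_{t\to 0+}tJ_t(\psi)<\infty$ reads exactly $\|w\nu'_t\|_{\mathcal{M}^b}=O(t^{-1})$ as $t\to 0+$, so condition (d) of Theorem \ref{criteria} is fulfilled and $\psi\in\mathcal{T}_n^Y$. The main technical obstacle will be the rigorous identification $\mathcal{L}m_t(-s)=\psi(s)e^{t\psi(s)}$: this requires a careful Paley--Wiener argument using the support assumption on $p_t$, together with enough regularity of $\tau\mapsto p_\tau$ in $L^1$ (which follows by viewing $F_\tau$ as a continuous $L^1$-family in $\tau>0$ on the Fourier side) to run the Bochner integration. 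Uniqueness of the derivative, once existence is granted, is then immediate from Theorem \ref{d/dt=bt} and the density of $E(\mathbb{R}^n_+)$ in $C_0(\mathbb{R}^n_+)$ provided by Stone--Weierstrass.
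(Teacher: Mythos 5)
Your proposal follows essentially the same route as the paper: identify the candidate measure $p_t(u)\,du$ (up to the normalization constant $(2\pi)^{n/2}$) as the weak derivative $w\nu'_t$ by matching Laplace transforms on $\overline{\Pi^n_-}$, and then invoke the implication $(d)\Rightarrow(a)$ of Theorem \ref{criteria}. Your extra step of integrating the candidate to get $\nu_t=\delta_0+\int_0^t m_\tau\,d\tau$ in fact makes the \emph{existence} of the derivative explicit, a point the paper's proof glosses over when it differentiates $\mathcal{L}\nu_t$ in $t$ already assuming $w\nu'_t$ exists.
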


\begin{proof} Since  $p_t\in L^1(\mathbb{R}^n_+)$ we have by the inverse formula for  the Fourier transform 
\begin{equation*}
F_t(y)=\frac{1}{(2\pi)^{n/2}}\int\limits_{\mathbb{R}^n_+}p_t(u)e^{\imath y\cdot u}du
\end{equation*}
for a.~e. $y\in \mathbb{R}^n$.  By the continuity the
last equality holds for all $y\in \mathbb{R}^n$. Therefore we have
for the Laplace transform
\begin{equation}\label{v}
\mathcal{L}p_t(-z)=\int\limits_{\mathbb{R}^n_+}e^{z\cdot r}p_t(r)dr=(2\pi)^{n/2}e^{t\psi(z)}\psi(z),\quad
{\rm Re}z_j\leq 0,
\end{equation}
by the Pinchuk boundary uniqueness theorem  \cite{Pinchuk} because  both sides here are analytic on the domain $\Pi^n_-$, continuous on its closure, and coincide on $\imath\mathbb{R}^n$.

On the other hand, 
$$
\mathcal{L}\nu_t(-z)=\int\limits_{\mathbb{R}^n_+}e^{z\cdot r}d\nu_t(r)=e^{t\psi(z)},\quad
{\rm Re}z_j\leq 0.
$$
It follows that
\begin{equation}\label{vv}
e^{t\psi(z)}\psi(z)=\frac{d}{dt}\int\limits_{\mathbb{R}^n_+}e^{z\cdot r}d\nu_t(r)=\int\limits_{\mathbb{R}^n_+}e^{z\cdot r}d(w\nu'_t)(r).
\end{equation}
Comparing \eqref{v} and \eqref{vv} we conclude that
$$
d(w\nu'_t)(r)=(2\pi)^{n/2}p_t(r)dr.
$$
Consequently,
$$
\|w\nu'_t\|_{\mathcal{M}^b}=(2\pi)^{n/2}\|p_t\|_{L^1}=J_t(\psi),
$$
and the result follows from Theorem \ref{criteria}.
\end{proof}

Let
 $$
K(\nu_t,\mu):=\sup\limits_{\|p\|_{E(\mathbb{R}_+^n)}=1} \left|\int\limits_{\mathbb{R}_+^n} \int\limits_{\mathbb{R}_+^n}p(r)d_r(\nu_t(r-u)-\nu_t(r))d\mu(u)\right|.
$$

\begin{theorem}\label{Knutmu}
Let
\begin{align*}
\limsup\limits_{t\to 0+}tK(\nu_t,\mu)<\infty.
\end{align*}
If the map $t\mapsto \nu_t$, $\mathbb{R}_+\to {\mathcal M}^b(\mathbb{R}_+^n)$ is weakly differentiable for $t>0$, then $\psi\in \mathcal{T}^Y_n$ (i.e. $\psi(A)$ is a generator of a quasibounded holomorphic semigroup $g(A)$ in $X$).
\end{theorem}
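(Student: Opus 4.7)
The plan is to recognize that $K(\nu_t,\mu)$ is, by construction, the norm of the functional $b_t$ of Definition \ref{bt} acting on the normed space $(E(\mathbb{R}_+^n),\|\cdot\|_\infty)$, and to use Theorem \ref{d/dt=bt} to identify this norm with $\|w\nu'_t\|_{\mathcal{M}^b}$; the conclusion then reduces to an application of Theorem \ref{criteria}.

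First I would observe that
\begin{equation*}
K(\nu_t,\mu)=\sup_{\|p\|_{E(\mathbb{R}_+^n)}=1}|b_t(p)|,
\end{equation*}
so $K(\nu_t,\mu)$ is exactly the operator norm of $b_t:E(\mathbb{R}_+^n)\to\mathbb{C}$ in the supremum norm. Under the weak-differentiability hypothesis, Theorem \ref{d/dt=bt} tells us that $b_t$ extends uniquely to a bounded Radon measure $w\nu'_t$ on $\mathbb{R}_+^n$. Next I would invoke the Stone-Weierstrass argument already used in the proof of Theorem \ref{d/dt=bt}: the algebra $E(\mathbb{R}_+^n)$ separates points, is self-conjugate, and at every point contains a non-vanishing element, hence is dense in $C_0(\mathbb{R}_+^n)$. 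By the Riesz representation theorem the total variation norm of $w\nu'_t$ coincides with its norm as a continuous linear functional on $C_0(\mathbb{R}_+^n)$, and by density of $E(\mathbb{R}_+^n)$ this functional norm equals the norm of its restriction $b_t$ to $E(\mathbb{R}_+^n)$. Combining these identifications yields
\begin{equation*}
\|w\nu'_t\|_{\mathcal{M}^b}=K(\nu_t,\mu).
\end{equation*}

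Given this equality, the hypothesis $\limsup_{t\to 0+}tK(\nu_t,\mu)<\infty$ is exactly $\|w\nu'_t\|_{\mathcal{M}^b}=O(t^{-1})$ as $t\to 0+$, which is condition (d) of Theorem \ref{criteria}. The implication (d)$\Rightarrow$(a) of that theorem then gives $\psi\in\mathcal{T}^Y_n$, completing the proof.

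The main obstacle is really only the norm identification in the middle step — one must be careful that the supremum taken over $E(\mathbb{R}_+^n)$ (a strict subspace of $C_0(\mathbb{R}_+^n)$) actually agrees with the total variation norm of the extending measure, but this is forced by the density argument together with Riesz representation and is essentially a bookkeeping exercise. Everything else is a direct invocation of the earlier results.
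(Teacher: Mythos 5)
Your argument is correct and follows the same route as the paper: identify $K(\nu_t,\mu)$ with the norm of the functional $b_t$ on the dense subalgebra $E(\mathbb{R}_+^n)\subset C_0(\mathbb{R}_+^n)$, invoke Theorem \ref{d/dt=bt} to get $w\nu'_t=b_t$ so that $\|w\nu'_t\|_{\mathcal{M}^b}=K(\nu_t,\mu)$, and conclude via the implication (d)$\Rightarrow$(a) of Theorem \ref{criteria}. The paper states the norm identification in one line; your explicit justification via Stone--Weierstrass density and the Riesz representation theorem is exactly the bookkeeping it leaves implicit.
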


\begin{proof} Since $E(\mathbb{R}_+^n)$ is dense in $C_0(\mathbb{R}_+^n)$ by the Stone-Weierstrass theorem, we have $\|b_t\|=K(\nu_t,\mu)$ (see Definition \ref{bt}). On the other hand, $w\nu'_t=b_t$ by Theorem \ref{d/dt=bt}, and the result follows from Theorem  \ref{criteria}.

\end{proof}

We shall deduce one more sufficient condition for $\psi\in \mathcal{T}^Y_n$ from Theorem  \ref{criteria}
 using the following lemma.

\begin{lemma}\label{lemma1}
Let $f\in L^1(\mathbb{R}_+^n)$, $g(s)=\mathcal{L}f(-s)$, and $\mu\in {\cal
M}(\mathbb{R}_+^n)$.  Assume that
\begin{equation*}
k(f,\mu):=\int_{\mathbb{R}_+^n}\int_{\mathbb{R}_+^n}|f(r-u)-f(r)|drd\mu(u)<\infty.
\end{equation*}
Then

1) for every $\psi\in \mathcal{T}^0_n$ there is such $b\in L^1(\mathbb{R}_+^n)$ that
\begin{equation*}
h(s):=\psi(s) g(s)=\mathcal{L}b(-s), \mbox{  and  } \|b\|_{L^1}\le k(f,\mu);
\end{equation*}

2) for every $A\in \mathrm{Gen}(X)^n$ with $\|T_A\|_{B(X)}\le M  \mbox{  we have  }$ 
\begin{equation*}
h(A)=\psi(A) g(A) \mbox{  and  } \|h(A)\|_{B(X)}\le Mk(f,\mu).
\end{equation*}

\end{lemma}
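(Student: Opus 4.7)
The plan is to exploit the Lévy--Khintchine representation of $\psi \in \mathcal{T}_n^0$ (where $c_0 = c_1 = 0$),
\begin{equation*}
\psi(s) = \int_{\mathbb{R}_+^n \setminus \{0\}} (e^{s\cdot u}-1)\, d\mu(u),
\end{equation*}
and to translate the computation back and forth between the Laplace transform and the Bochner--Phillips calculus by means of Fubini's theorem. The role of the hypothesis $k(f,\mu)<\infty$ is exactly to make the relevant exchanges absolutely convergent.

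For part (1), I first record the shift identity: extending $f$ by zero outside $\mathbb{R}_+^n$ and changing variables $r \mapsto r+u$,
\begin{equation*}
(e^{s\cdot u}-1)g(s) = \int_{\mathbb{R}_+^n} e^{s\cdot r}(f(r-u)-f(r))\,dr.
\end{equation*}
Multiplying by $d\mu(u)$ and integrating over $\mathbb{R}_+^n\setminus\{0\}$, I exchange the order of integration (legitimate because $|e^{s\cdot r}|\le 1$ for $\mathrm{Re}\,s\le 0$ and the iterated integral of $|f(r-u)-f(r)|$ equals $k(f,\mu)<\infty$). This gives $\psi(s)g(s)=\mathcal{L}b(-s)$ with
\begin{equation*}
b(r) := \int_{\mathbb{R}_+^n \setminus \{0\}} (f(r-u)-f(r))\, d\mu(u),
\end{equation*}
and Tonelli immediately yields $\|b\|_{L^1}\le k(f,\mu)$.

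For part (2), I would mirror this calculation on the operator side. For $x \in D(A)$, the operator $g(A)x$ lies in $D(A)$, because $g(A)$ commutes with each $T_j(\cdot)$ and hence maps each $D(A_j)$ into itself; consequently the Bochner-integral definition of $\psi(A)$ applies directly:
\begin{equation*}
\psi(A)g(A)x = \int_{\mathbb{R}_+^n \setminus \{0\}} (T_A(u)-I)g(A)x\, d\mu(u).
\end{equation*}
Writing $g(A)x = \int T_A(r)x\, f(r)\,dr$ inside and applying the shift $r \mapsto r+u$ to the $T_A(u)$ term, the operator-valued analogue of the Laplace calculation produces
\begin{equation*}
\psi(A)g(A)x = \int_{\mathbb{R}_+^n} T_A(r)x\,b(r)\,dr = h(A)x,
\end{equation*}
the Bochner-valued Fubini being legitimate because $\|T_A(\cdot)\|\le M$ together with $k(f,\mu)<\infty$ forces $\int\!\int \|T_A(r)x\|\,|f(r-u)-f(r)|\,dr\,d\mu(u) \le M\|x\|k(f,\mu)$. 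Since $g(A)$ and $h(A)$ are bounded and $\psi(A)$ is closed, a limiting argument on the dense subspace $D(A)$ extends the identity to every $x\in X$, giving simultaneously $g(A)x \in D(\psi(A))$ and $\psi(A)g(A)=h(A)$. The bound $\|h(A)\|_{B(X)} \le M\|b\|_{L^1}\le Mk(f,\mu)$ is then the usual Bochner estimate applied to $h(A)=\int T_A(r)b(r)\,dr$.

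The main obstacle is bookkeeping for the Bochner-valued Fubini exchange in part (2); one must carefully verify absolute convergence in the norm of $X$, but this reduces to the scalar estimate above. Once that is in place, the rest is essentially translation between the transform side (which handles part (1)) and the $\mathcal{T}_n^0$-calculus side (which handles part (2)).
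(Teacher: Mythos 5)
Your proposal is correct and follows essentially the same route as the paper: the same shift identity $(e^{s\cdot u}-1)g(s)=\int e^{s\cdot r}(f(r-u)-f(r))\,dr$, the same definition of $b$, and the same Fubini exchange justified by $k(f,\mu)<\infty$, both on the transform side and on the Bochner-integral side. The only (welcome) difference is that in part (2) you run the computation from $\psi(A)g(A)x$ on $D(A)$ and then invoke closedness of $\psi(A)$ to pass to all of $X$, a domain point the paper's proof leaves implicit when it identifies $\int(T_A(u)-I)g(A)\,d\mu(u)$ with $\psi(A)g(A)$.
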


\begin{proof}
1) Since   $f=0$  on $\mathbb{R}^n\setminus \mathbb{R}_+^n$,  we have for $u\in \mathbb{R}_+^n$, $s\in(-\infty,0)^n$
\begin{align*}
(e^{s\cdot u}-1)g(s)&=\int_{\mathbb{R}_+^n}e^{s\cdot (u+r)}f(r)dr-\int_{\mathbb{R}_+^n}e^{s\cdot r}f(r)dr\\[5pt] 
&=\int_{\mathbb{R}_+^n}e^{s\cdot r}(f(r-u)-f(r))dr.
\end{align*}
Then 
\begin{align*}
h(s)&=\psi(s) g(s)=\int_{\mathbb{R}_+^n}(e^{s\cdot u}-1)g(s)d\mu(u)\\[5pt]  
&=\int_{\mathbb{R}_+^n}\int_{\mathbb{R}_+^n}e^{s\cdot r}(f(r-u)-f(r))dr\mu(u)\\[5pt]  
&=\int_{\mathbb{R}_+^n}e^{s\cdot r}\int_{\mathbb{R}_+^n}e^{s\cdot r}(f(r-u)-f(r))\mu(u)dr\\[5pt] 
&=\mathcal{L}b(-s),
\end{align*}
where $b(r):=\int_{\mathbb{R}_+^n}(f(r-u)-f(r))\mu(u)$, $\|b\|_{L^1}\le k(f,\mu)$.
Above (and below)  the  application of Fubini theorem is justified, since  $k(f,\mu)<\infty$.

2)  For  $A\in \mathrm{Gen}(X)^n$ consider the Bochner integral
\begin{align*}
h(A)&=\int_{\mathbb{R}_+^n}T_A(r)b(r)dr\\[5pt] 
&=\int_{\mathbb{R}_+^n}T_A(r)\int_{\mathbb{R}_+^n}(f(r-u)-f(r))d\mu(u)dr\\[5pt] 
&=\int_{\mathbb{R}_+^n}\left(\int_{\mathbb{R}_+^n}(f(r-u)-f(r))T_A(r)dr\right)d\mu(u)\\[5pt] 
&=\int_{\mathbb{R}_+^n}\int_{\mathbb{R}_+^n}f(r-u)T_A(r)drd\mu(u)-\int_{\mathbb{R}_+^n}\int_{\mathbb{R}_+^n}f(r)T_A(r)drd\mu(u).
\end{align*}

Furthermore, since $f$ is concentrated on $\mathbb{R}_+^n$, 
\begin{align*}
\int_{\mathbb{R}_+^n}f(r-u)T_A(r)dr&=\int_{\mathbb{R}_+^n}f(r)T_A(r+u)dr\\[5pt]
&=T_A(u)\int_{\mathbb{R}_+^n}f(r)T_A(r)dr.
\end{align*}
Then
\begin{align*}
h(A)=\int_{\mathbb{R}_+^n}(T_A(u)-I)\int_{\mathbb{R}_+^n}f(r)T_A(r)drd\mu(u)=\psi(A)g(A).
\end{align*}

Finally, 
\begin{align*}
\|h(A)\|_{B(X)}\le \int_{\mathbb{R}_+^n}\|T_A(r)\|_{B(X)}|b(r)|dr\le M\|b\|_{L^1}\le Mk(f,\mu).
\end{align*}
\end{proof}

\begin{theorem}\label{sufficient2}
Let $\psi\in \mathcal{T}^0_n$ with a representing measure $\mu$, and $g_t(s):=e^{t\psi(s)}=\mathcal{L}f_t(-s)$ with $f_t\in L^1(\mathbb{R}_+^n),$ $t>0$.
If 
\begin{align*}
\limsup\limits_{t\to 0+}tk(f_t,\mu)<\infty,
\end{align*}
then $\psi\in \mathcal{T}^Y_n$ (i.e. $\psi(A)$ is a generator of a quasibounded holomorphic semigroup $g(A)$ in $X$).
\end{theorem}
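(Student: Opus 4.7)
The plan is to apply the implication $(d)\Rightarrow(a)$ of Theorem \ref{criteria} by producing an explicit $L^1$-representative of the weak derivative $w\nu'_t$; Lemma \ref{lemma1} supplies the candidate. Specifically, I apply Lemma \ref{lemma1}(1) with $f:=f_t$, so that $g=g_t$ and $\psi(s)g(s)=\psi(s)e^{t\psi(s)}$. Finiteness of $k(f_t,\mu)$ holds for small $t>0$ by the standing hypothesis and propagates to all $t>0$ via the identity $b_{t+u}=b_u*f_t$ (verified on the Laplace side) together with $\|f_t\|_{L^1}\le 1$. The lemma then yields $b_t\in L^1(\mathbb{R}_+^n)$ satisfying
\begin{equation*}
\mathcal{L}b_t(-s)=\psi(s)e^{t\psi(s)},\qquad \|b_t\|_{L^1}\le k(f_t,\mu).
\end{equation*}

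Next I identify $w\nu'_t$ with the absolutely continuous signed measure $b_t\cdot dr$. Since $d\nu_t=f_t\,dr$ by the standing hypothesis, Laplace injectivity applied to $\frac{d}{du}e^{u\psi(s)}=\mathcal{L}b_u(-s)$ yields, after integration in $u$ and Fubini, the $L^1$ identity $f_{t+h}-f_t=\int_0^h b_{t+u}\,du$ for $h>0$. Pairing with $\phi\in C_0(\mathbb{R}_+^n)$, dividing by $h$, and invoking continuity of $u\mapsto\langle b_{t+u},\phi\rangle$ at $u=0$ (ensured by pointwise Laplace continuity on the dense subspace $E(\mathbb{R}_+^n)$ of exponential polynomials together with uniform control $\|b_{t+u}\|_{L^1}\le k(f_{t+u},\mu)$ on a neighborhood of $u=0$) gives $\tfrac{1}{h}(\nu_{t+h}-\nu_t)\to b_t\cdot dr$ in the weak$^*$ topology on $(C_0(\mathbb{R}_+^n))^*$. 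Extending to test functions in $C^b(\mathbb{R}_+^n)$ uses uniform boundedness in $\mathcal{M}^b$-norm together with the matching total masses $\tfrac{1}{h}(\nu_{t+h}-\nu_t)(1)=0=\psi(0)e^{t\psi(0)}=\int b_t\,dr$. This establishes $w\nu'_t=b_t\cdot dr$ and
\begin{equation*}
\|w\nu'_t\|_{\mathcal{M}^b}=\|b_t\|_{L^1}\le k(f_t,\mu).
\end{equation*}

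The hypothesis $\limsup_{t\to 0+}tk(f_t,\mu)<\infty$ then yields $\|w\nu'_t\|_{\mathcal{M}^b}=O(t^{-1})$ as $t\to 0+$, and Theorem \ref{criteria} $(d)\Rightarrow(a)$ gives $\psi\in\mathcal{T}_n^Y$. The main technical obstacle I anticipate is the rigorous upgrade of weak$^*$ convergence against $C_0(\mathbb{R}_+^n)$ to convergence against all of $C^b(\mathbb{R}_+^n)$, as required by the paper's definition of weak derivative; this reduces to a tightness statement for the signed measures $\tfrac{1}{h}(\nu_{t+h}-\nu_t)$, which should follow from the probability-measure structure of $(\nu_t)$ combined with the convolution semigroup identity $\nu_{t+h}=\nu_t*\nu_h$.
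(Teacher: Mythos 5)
Your argument is correct in substance but takes a genuinely different route from the paper's. The paper applies part 2) of Lemma \ref{lemma1} with $f=f_t$, $g=g_t$: for \emph{every} $A\in\mathrm{Gen}(X)^n$ this gives at once that $\psi(A)g_t(A)=h_t(A)$ is a bounded operator (hence $g_t(A)X\subset D(\psi(A))$) with $\|\psi(A)g_t(A)\|_{B(X)}\le Mk(f_t,\mu)$, so the Yosida condition (Y), i.e.\ condition (a) of Theorem \ref{criteria}, is verified directly in two lines, with no need to identify any weak derivative. You instead use part 1) of the lemma to produce the scalar density $b_t\in L^1(\mathbb{R}_+^n)$ with $\mathcal{L}b_t(-s)=\psi(s)e^{t\psi(s)}$ and $\|b_t\|_{L^1}\le k(f_t,\mu)$, identify $w\nu_t'=b_t\,dr$, and invoke $(d)\Rightarrow(a)$. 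That works, and it buys an explicit formula for the derivative measure, but it forces you through the measure-theoretic overhead you flag at the end: joint measurability for the Fubini step in $f_{t+h}-f_t=\int_t^{t+h}b_u\,du$, and the upgrade from weak$^*$ convergence against $C_0(\mathbb{R}_+^n)$ to weak convergence against $C^b(\mathbb{R}_+^n)$, which you only sketch. That last obstacle can be bypassed entirely: since $b_{t+u}=b_t\ast f_u$ and $(f_u\,dr)_{u>0}=(\nu_u)_{u>0}$ is an approximate identity of probability measures converging weakly to $\delta_0$, the map $u\mapsto b_{t+u}$ is $L^1$-norm continuous, whence $\frac{1}{h}(f_{t+h}-f_t)=\frac{1}{h}\int_0^h b_{t+u}\,du\to b_t$ in $L^1$ norm; this gives norm differentiability of $t\mapsto\nu_t$ in $\mathcal{M}^b(\mathbb{R}_+^n)$, i.e.\ the stronger condition (c), and no tightness argument for signed measures is needed. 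Still, the paper's route via part 2) of the lemma is the shorter and cleaner one, and part 1) plus part 2) were evidently designed to make exactly that shortcut available.
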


\begin{proof} Indeed, by Lemma \ref{lemma1}
\begin{align*}
\limsup\limits_{t\to 0+}t\left\|\frac{d}{dt}g_t(A)\right\|_{B(X)}=\limsup\limits_{t\to 0+}t\|\psi(A)g_t(A)\|_{B(X)}\le 
M \limsup\limits_{t\to 0+}tk(f_t,\mu)<\infty.
\end{align*}  
Now the result follows from Theorem \ref{criteria}.
\end{proof}

\begin{corollary}\label{corollary1}
Let in addition  $f_t\ge 0$  and   $f_t(r-u)\ge f_t(r)$ for all $r,u,r-u\in \mathbb{R}_+^n, t>0$. Let $\Gamma_u:=\mathbb{R}_+^n\setminus (u+\mathbb{R}_+^n)$.
If 
\begin{align*}
\limsup\limits_{t\to 0+}t \int_{\mathbb{R}_+^n} \int_{\Gamma_u}f(r)drd\mu(u)<\infty,
\end{align*}
then $\psi\in \mathcal{T}^Y_n$ (i.e. $\psi(A)$ is a generator of a quasibounded holomorphic semigroup $g(A)$ in $X$).
\end{corollary}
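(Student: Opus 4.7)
The plan is to reduce this corollary to Theorem \ref{sufficient2}. Since that theorem requires $\limsup_{t\to 0+} tk(f_t,\mu)<\infty$, it suffices to establish the identity
$$k(f_t,\mu)=2\int_{\mathbb{R}_+^n}\int_{\Gamma_u}f_t(r)\,dr\,d\mu(u),$$
after which the hypothesis of the corollary and Theorem \ref{sufficient2} give the conclusion at once.

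To prove the identity, I would first extend $f_t$ by $0$ outside $\mathbb{R}_+^n$; this is the natural extension, since $f_t\in L^1(\mathbb{R}_+^n)$, and it makes $f_t(r-u)$ meaningful for all $r,u\in\mathbb{R}_+^n$. For each $u\in\mathbb{R}_+^n$ I would split the inner integral in $k(f_t,\mu)$ along the disjoint decomposition $\mathbb{R}_+^n=(u+\mathbb{R}_+^n)\sqcup\Gamma_u$. On $\Gamma_u$ the point $r-u$ has at least one negative coordinate, so $f_t(r-u)=0$ and $|f_t(r-u)-f_t(r)|=f_t(r)$. On $u+\mathbb{R}_+^n$ the hypothesis $f_t(r-u)\ge f_t(r)\ge 0$ allows the absolute value to be removed as $f_t(r-u)-f_t(r)$.

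The main computation is then the change of variables $r'=r-u$ on $u+\mathbb{R}_+^n$:
\begin{align*}
\int_{u+\mathbb{R}_+^n}\bigl(f_t(r-u)-f_t(r)\bigr)\,dr
&=\int_{\mathbb{R}_+^n}f_t(r')\,dr'-\int_{u+\mathbb{R}_+^n}f_t(r)\,dr\\
&=\int_{\Gamma_u}f_t(r)\,dr,
\end{align*}
where in the last step I use the additivity $\int_{\mathbb{R}_+^n}f_t=\int_{u+\mathbb{R}_+^n}f_t+\int_{\Gamma_u}f_t$. Adding this to the $\Gamma_u$-contribution $\int_{\Gamma_u}f_t(r)\,dr$ produces the factor of $2$ in the asserted identity, after which one integrates against $d\mu(u)$.

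There is no genuine obstacle here; the argument is essentially bookkeeping built on translation invariance of Lebesgue measure and the sign structure provided by the hypotheses. The only minor point deserving attention is justifying the Fubini interchange used both in the identity itself and in Theorem \ref{sufficient2}, but that is automatic once the right-hand side is finite, which is exactly the hypothesis of the corollary. With the identity in place, Theorem \ref{sufficient2} finishes the proof.
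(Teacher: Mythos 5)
Your proof is correct and follows essentially the same route as the paper: reduce to Theorem \ref{sufficient2} via the identity $k(f_t,\mu)=2\int_{\mathbb{R}_+^n}\int_{\Gamma_u}f_t(r)\,dr\,d\mu(u)$, proved by splitting $\mathbb{R}_+^n$ into $\Gamma_u$ (where $f_t(r-u)=0$) and $u+\mathbb{R}_+^n$ (where the monotonicity hypothesis removes the absolute value), followed by a translation of variables. The paper's argument is the same computation, so nothing further is needed.
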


\begin{proof} It is  sufficient  to prove that
\begin{align*}
k(f_t,\mu)=2 \int_{\mathbb{R}_+^n} \int_{\Gamma_u}f_t(r)drd\mu(u).
\end{align*}
To this end, note that
\begin{align*}
 \int_{\mathbb{R}_+^n} |f_t(r-u)-f_t(r)|dr&= \int_{\Gamma_u}f_t(r)dr+\int_{\mathbb{R}_+^n\setminus \Gamma_u}(f_t(r-u)-f_t(r))dr\\[5pt]
 &= \int_{\Gamma_u}f_t(r)dr+\int_{u+\mathbb{R}_+^n}f_t(r-u)dr-\int_{\mathbb{R}_+^n\setminus \Gamma_u}f_t(r)dr\\[5pt]
 &=\int_{\Gamma_u}f_t(r)dr+\int_{\mathbb{R}_+^n}f_t(r)dr-\int_{\mathbb{R}_+^n\setminus \Gamma_u}f_t(r)dr\\[5pt]
 &=2\int_{\Gamma_u}f_t(r)dr.
 \end{align*}
 The proof is complete.
\end{proof}








\renewcommand{\refname}{\begin{center}{\Large\bf References} \end{center}}
\makeatletter
\renewcommand{\@biblabel}[1]{#1.\hfill}
\makeatother

\bibliographystyle{amsplain}

\end{document}